\newcommand{\ef}{\end{equation}}
\chardef\bslash=`\\
\newtheorem{thm}{Theorem}[section]
\newtheorem*{thm*}{Theorem}
\newtheorem{lem}[thm]{Lemma}
\newtheorem{corl}[thm]{Corollary}
\newtheorem{prop}[thm]{Proposition}
\newtheorem{prop*}{Proposition}
\theoremstyle{definition}
\newtheorem{defn}[thm]{Definition}
\newtheorem{examp}[thm]{Example}
\newtheorem*{examp*}{Example}
\newtheorem*{remark*}{Remark}
\newtheorem*{Note*}{Note}
\newtheorem*{defn*}{Definition}
\theoremstyle{remark}
\newtheorem{remark}[thm]{Remark}
\newcommand\str{\raisebox{0pt}[13pt][6pt]{}}
\newcommand\mo{\negthinspace ^-\tmspace-{2mu}{.1667em}1}
\newcommand\mt{\negthinspace ^-\tmspace-{2mu}{.1667em}2}
\newlength\brone\newlength\brtwo\newlength\brthree
\newcommand\brac[1]{\settoheight{\brone}{\hbox{$#1$}}\addtolength\brone{-\brtwo}\left(\raisebox{-35pt}[\brone]{}\right.\hspace{-\brthree}#1\hspace{-\brthree}\left.\raisebox{0pt}[\brone]{}\right)}
\newcommand\ddts{\begin{picture}(8,8)\multiput(8,0)(-4,4)3{.}\end{picture}}
\newcommand\ppmod[1]{\ (\operatorname{mod}\,#1)}
\newcommand\bbf{\mathbb{F}}
\newcommand\bbc{\mathbb{C}}
\newcommand\amod{\operatorname{mod}}
\renewcommand{\sectionmark}[1]{}
\newcommand{\la}{\langle}
\newcommand{\ra}{\rangle}
\newcommand\bbz{\mathbb Z}
\newcommand\bbq{\mathbb Q}
\newcommand\bbr{\mathbb R}
\newcommand\re{\operatorname{re}}
\newcommand{\lm}{\lambda}
\newcommand{\Lm}{\Lambda}
\newcommand\fkg{\mathfrak{g}}
\newcommand\fkh{\mathfrak{h}}
\newcommand\dra\dashrightarrow
\newcommand\nod[4]{{\scriptsize $(#1,\negthinspace#2,\negthinspace#3)\rlap{$^{\negthinspace#4}$}$}}
\newcommand\nodhub[7]{{\scriptsize $\begin{matrix}(#1,\negthinspace#2,\negthinspace#3)\rlap{$^{\negthinspace#4}$}\\[-2pt][#5,\negthinspace#6,\negthinspace#7]\end{matrix}$}}
\newcommand\ow{\overset{\circ}W}
\newcommand{\defe} {\operatorname{def}}
\newcommand\oa{\overline\alpha_}
\renewcommand\leq\leqslant
\renewcommand\geq\geqslant
\renewcommand\le\leqslant
\renewcommand\ge\geqslant
\newlength\lc
\newlength\lo
\date{}
\begin{document}
\renewcommand{\bottomfraction}{0.999}
\renewcommand{\textfraction}{0.001}
\renewcommand\topfraction{0.999}


\tikzset{redx/.style={red,thick}}
\tikzset{greenx/.style={dashed,green!30!black,thick}}
\tikzset{bluex/.style={blue,dash pattern=on 1pt off 3pt on 3pt off 3pt,thick}}
\tikzset{purplex/.style={red!65!blue,thick}}
\tikzset{lbluex/.style={dashed,blue!40!black,thick}}
\tikzset{lgreenx/.style={green!40!black,dash pattern=on 1pt off 3pt on 3pt off 3pt,thick}}
\tikzset{greyx/.style={gray,thick}}
\newlength\ri
\newlength\lf
\newlength\rd
\newlength\ld
\newlength\dd
\newlength\ofs


\author{O.\ Barshevsky, M.\ Fayers and M.\ Schaps}

\title[A non-recursive criterion for weights] {A non-recursive criterion for weights of a highest-weight module for an affine Lie algebra}

\begin{abstract}
Let $\Lm$ be a dominant integral weight of level $k$ for the affine Lie algebra $\mathfrak g$
and let $\alpha$ be a non-negative integral combination of simple roots. We address the question of whether the weight $\eta=\Lm -\alpha$ lies in the set $P(\Lm)$ of weights in the irreducible highest-weight module with highest weight $\Lm$. We give a non-recursive criterion in terms of the coefficients of $\alpha$ modulo an integral lattice $kM$, where $M$ is the lattice parameterizing the abelian normal subgroup $T$ of the Weyl group. The criterion requires the preliminary computation of a set no larger than the fundamental region for $kM$, and we show how this set can be efficiently calculated.
\end{abstract}

\maketitle

\section {Introduction}

To an affine Lie algebra $\mathfrak g$ over $\mathbb C$ corresponds a vector space $\mathfrak h$ containing the Cartan subalgebra and a dual vector space $\mathfrak h^*$. An important class of modules are the \emph{integrable highest-weight modules} $L(\Lm)$ for $\Lm \in \mathfrak h^*$. We will study the set $P(\Lm)$ of weights of the homogeneous elements of $L(\Lm)$.

For each affine Lie algebra there is a certain integral lattice $M$ of weights of rank $\ell$ defined in \cite[6.5.8]{Ka}, parametrizing an abelian subgroup $T$ of the Weyl group. We determine a set $\bar N$ of maximal weights which is in bijection with the image of $P(\Lm)$ in a fundamental region for $kM$. We give a criterion for a weight $\eta$ to lie in $P(\Lm)$, which is a generalization of \cite[12.6.3]{Ka} to levels $k > 1$. In the process we also prove that every weight in $\Lm - Q$ can be $\delta$-shifted to an element of $P(\Lm)$ (where $Q$ denotes the root lattice and $\delta$ the null root). Finally, using a case-by-case study of the affine families and the affine exceptional algebras, we show how to separate the weights into $Q$-classes by congruences and how to find the maximal dominant weights in each class.

The original motivation for this research was an investigation of the existence of the block $H^\Lm_\alpha$ of the cyclotomic Hecke algebra $H^\Lm_d(\bbf,\xi)$ \cite{AK}, where $\xi \in \bbf^\times$ is a primitive $(\ell+1)$-th root of unity. This question is typically settled by a recursive construction of the weights of blocks up to rank $d$ or by the construction of a multipartition with content $\alpha$. By the categorification result in \cite{AM}, such a block exists if and only if the corresponding weight $\eta$ is in $P(\Lm)$ for the affine Lie algebra $\fkg(A^{(1)}_{\ell})$, so our non-recursive criterion gives a criterion in terms of the residues of the coefficients of $\alpha$ modulo $k$. In this case the set to be computed is of order $k^{\ell}$. A similar procedure involving $A^{(2)}_{2\ell}$ corresponds to cyclotomic Hecke algebras related to spin representations of the symmetric groups \cite{Kl}.

\subsection{Affine Lie algebras}

In this paper we work with the affine Lie algebra $\mathfrak g$ defined by an $(\ell+1)\times (\ell+1)$ Cartan matrix $A = [a_{ij}]$.
There are two families ($A^{(1)}_\ell$ and $D^{(1)}_\ell$) with symmetric Cartan matrices, several other families ($B^{(1)}_\ell$, $C^{(1)}_\ell$, $D^{(2)}_\ell$, $A^{(2)}_{2\ell-1}$ and $A^{(2)}_{2\ell}$) with non-symmetric matrices, and a number of exceptional algebras \cite[Chap.\ 4]{Ka}. The algebras with exponent $(1)$ will be called untwisted, and the algebras with exponent $(2)$ or $(3)$ will be called twisted.

The algebra $\mathfrak g$ has simple roots $\alpha_i$ and simple coroots $h_i$ for $i\in\{0,\dots,\ell\}$, with a pairing given by the Cartan matrix entries
\[\la h_i,\alpha_j\ra=a_{ij}.\]
We choose a set of fundamental weights $\{\Lm_i\mid i=0,\dots,\ell\}$ in $\fkh^\ast$ which satisfy $\la h_i,\Lm_j\ra=\delta_{ij}$. In the affine case, we have a \emph{null root}
\[
\delta=a_0\alpha_0+\dots+a_\ell\alpha_\ell
\]
with integer coefficients, which generates the kernel when $A$ acts from the left, and, dually, a canonical central element
\[
c = a_0^\vee h_0+\dots+a_\ell^\vee h_\ell,
\]
where the integral coefficients of $c$ are chosen so that that $\la c, \alpha_i\ra = 0$ for any simple root $\alpha_i$.

If we let $D$ be the matrix $\operatorname{diag}(a_0^{-1}a_0^\vee,\dots,a_\ell^{-1}a_\ell^\vee)$, then the matrix $B=DA$ is symmetric. This matrix determines an invariant symmetric bilinear form $(\cdot|\cdot)$, for which we have
\[(\alpha_i|\Lm_j)=\frac{a^\vee_i}{a_i}\delta_{ij},\qquad (\alpha_i|\alpha_j)=\frac{a^\vee_i}{a_i} a_{ij},\qquad (\alpha_i|\delta)=0,\qquad (\delta|\delta)=0.\]

Following Kac, we define
\begin{alignat*}3
P&=\{\eta \in \mathfrak{h}^*&\mid\la h_i,\eta\ra &\in \mathbb{Z}\text{ for }&&i=0,\dots,\ell\},\\
P_+&=\{\eta \in P&\mid\la h_i,\eta\ra &\geq 0\text{ for }&&i=0,\dots,\ell\},\\
Q&=\textstyle\sum_{i=0}^\ell\rlap{$\mathbb Z\alpha_i$,}&&&&\\
Q_+ &=\textstyle\sum_{i=0}^\ell\rlap{$\mathbb Z_{\geq0}\alpha_i$.}&&&&
\end{alignat*}
The weights in $P$ are called \emph{integral weights}, and those in $P_+$ are \emph{dominant integral weights}.

\begin{defn} The \emph{level} of a dominant integral weight $\Lm$ is the integer $k=\la c,\Lm\ra$, or equivalently $(\Lm|\delta)$.
\end{defn}

\subsection{The weights of an integrable highest-weight module}

In this paper we fix a dominant integral weight $\Lm\in P_+$. We let $P(\Lm)$ denote the set of weights labeling non-zero weight spaces in the integrable highest-weight module $L(\Lm)$. Then $P(\Lm)$ is a subset of $\Lm-Q_+$.

Although the set $P(\Lm)$ is discussed in detail in \cite{Ka}, it is not easy to determine whether a given weight lies in $P(\Lm)$. The purpose of this paper is to give a non-recursive way to do this.

A weight $\eta\in P(\Lambda)$ is called \textit{maximal} if $\eta+\delta$ is not a weight for $\Lambda$, and the set of maximal weights is denoted $\max(\Lambda)$.
By \cite[(12.6.1)]{Ka} the set of weights for $L(\Lambda)$ is the union of the negative shifts by the null root $\delta$ of the maximal weights:
$$P(\Lambda) = \left\{\lambda-s\delta \ \middle|\ \lambda \in \max(\Lambda), s \in \mathbb{Z}_{\geq0}\right\}.$$

\subsection{The Weyl group}\label{weylsec}

Let $W$ denote the Weyl group of $\fkg$; considered as a group acting on the weight space $\fkh^\ast$, this is generated by the reflections $s_0,\dots,s_\ell$ defined by
\[
s_i:\eta\longmapsto \eta-\la h_i,\eta\ra\alpha_i.
\]
The Weyl group is of critical importance to this paper, since it fixes the set $P(\Lm)$ of weights for $L(\Lm)$.  It also fixes the null root $\delta$, and therefore fixes the set $\max(\Lm)$.

The criterion we shall prove in the next section depends on the decomposition of $W$ as a semi-direct product $T\rtimes\overset\circ W$, where $\overset\circ W$ is the (finite) subgroup generated by $s_1,\dots,s_\ell$, and $T$ is a torsion-free abelian group.  The elements of $T$ correspond to a certain lattice $M$ in $\fkh^\ast$, which has generators
\[\frac{d_1\alpha_1}{a_0},\dots,\frac{d_\ell\alpha_\ell}{a_0}\]
for certain positive integers $d_1,\dots,d_\ell$, which are given in \cite[(6.5.8)]{Ka}.  When the Cartan matrix is symmetric (types $A^{(1)}_\ell$ and $D^{(1)}_\ell$) or of twisted type other than $A^{(2)}_{2\ell}$, all the $d_i$ are equal to $1$.  For $\alpha\in M$, the corresponding element $t_\alpha$ of $T$ is given by \cite[(6,5,2)]{Ka}:
\[
t_\alpha:\eta\mapsto\eta+k\alpha-\left((\eta|\alpha)+\tfrac12(\alpha|\alpha)k\right)\delta,
\]
where $k=\la c,\eta\ra$ is the level of $\eta$.

\section{The criterion for membership in $P(\Lm)$}

In this section, we give a theorem which yields an algorithm for determining whether a given weight $\eta\in\Lm-Q_+$ lies in $P(\Lm)$.  Writing $\eta=\Lm-\sum_ib_i\alpha_i$, we may refer to $\eta$ by its \emph{content} $b=(b_0,\dots,b_\ell)$, if $\Lm$ is understood. We will give another representation of $\eta$ in terms of the decomposition $W=T\rtimes\overset \circ W$.

By \cite[Corollary 10.1]{Ka}, a weight $\eta\in P(\Lm)$ is equivalent under the action of the affine Weyl group $W$ to a unique element of $P_+\cap P(\Lm)$. From this we can obtain a description of the set $P(\Lm)$ modulo $\delta$. First we need to know that every weight of positive level is $W$-equivalent to a dominant weight.

\begin{prop}\label{tits}
Suppose $\eta\in P$ with $\la c,\eta\ra>0$. Then there is $w\in W$ such that $w\eta\in P_+$.
\end{prop}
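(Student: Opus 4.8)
The plan is to produce the required $w$ by an extremal (``energy-minimising'') argument rather than a naive greedy reflection algorithm, since $W$ is infinite and termination of such an algorithm is not automatic. First I would fix coordinates adapted to the decomposition of $\mathfrak h^*$: writing $\Lm_0$ for the fundamental weight of level $1$ and $\ov{\mathfrak h}{}^*$ for the real span of the classical simple roots $\alpha_1,\dots,\alpha_\ell$, every integral weight of level $k$ can be written uniquely as $\eta=k\Lm_0+\ov\eta+t\delta$ with $\ov\eta\in\ov{\mathfrak h}{}^*$ and $t\in\bbr$. Using the values of the form recorded in the excerpt, namely $(\alpha_i|\Lm_0)=(\alpha_i|\delta)=(\delta|\delta)=0$ for $i\ge1$, together with the normalisations $(\Lm_0|\Lm_0)=0$ and $(\Lm_0|\delta)=1$ (the latter since $\Lm_0$ has level $1$), one gets the key identity $(\eta|\eta)=(\ov\eta|\ov\eta)+2kt$. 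I will exploit three facts: $W$ preserves the form, the level, and $\delta$; the classical reflections $s_1,\dots,s_\ell$ generate the finite group $\overset{\circ}{W}$, act on $\ov\eta$ as the usual isometric finite Weyl group, and fix $t$ (as $\alpha_1,\dots,\alpha_\ell$ have $\delta$-coefficient $0$); and, since $\la h_i,\delta\ra=0$, dominance of a weight is unchanged by adding multiples of $\delta$.

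The quantity I would minimise over the orbit $W\eta$ is the classical norm $(\ov\mu|\ov\mu)$, equivalently (by the identity above, since $(\mu|\mu)=(\eta|\eta)$ is constant on the orbit) the maximisation of the $\delta$-coefficient $t(\mu)$. This is exactly where the hypothesis $\la c,\eta\ra=k>0$ does the work, and where the only real obstacle lies: because the classical form is positive definite (the nodes $1,\dots,\ell$ span a finite-type diagram), $(\ov\mu|\ov\mu)\ge0$, so $t(\mu)\le(\eta|\eta)/2k$ is bounded above on the orbit. Moreover the orbit lies in $\eta+Q$, and as $\alpha_0$ has $\delta$-coefficient $1/a_0$ while $\alpha_1,\dots,\alpha_\ell$ have $\delta$-coefficient $0$, the values $t(\mu)$ lie in the discrete set $t(\eta)+a_0^{-1}\bbz$. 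A set of reals that is discrete and bounded above attains its maximum, so there is $\mu\in W\eta$ with $t(\mu)$ maximal; without positivity of the level this maximum need not exist, which is the crux of the argument.

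Finally I would upgrade such an extremal $\mu$ to a dominant weight. Since the classical reflections fix $t$, I may apply elements of $\overset{\circ}{W}$ to $\mu$, keeping $t(\mu)$ maximal, until $\ov\mu$ is dominant for the classical root system, i.e.\ $\la h_i,\mu\ra\ge0$ for $i=1,\dots,\ell$; this is a finite process by the standard theory of the finite Weyl group. It then remains only to check the affine wall $\la h_0,\mu\ra\ge0$. If instead $\la h_0,\mu\ra<0$, then $s_0\mu=\mu-\la h_0,\mu\ra\alpha_0$ has $\delta$-coefficient $t(\mu)-\la h_0,\mu\ra/a_0>t(\mu)$, contradicting maximality. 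Hence $\la h_i,\mu\ra\ge0$ for all $i=0,\dots,\ell$, so $\mu=w\eta\in P_+$ for the corresponding $w\in W$, as required. I expect the maximisation step — showing the extremal element exists at all — to be the main obstacle, with everything else reducing to the finite Weyl group and a one-line computation with $s_0$.
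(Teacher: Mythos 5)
Your proof is correct, but it takes a genuinely different route from the paper. The paper's proof is essentially a citation: it invokes Kac's Tits cone result \cite[Proposition 5.8(a)]{Ka}, which says the cone $W\cdot\{h\in\fkh_\bbr \mid \la h,\alpha_i\ra\geq0\}$ contains every $h$ with $\la h,\delta\ra>0$, applies it to the transposed algebra ${}^t\fkg$, and then interchanges $h_i$ and $\alpha_i$ to transport the statement from $\fkh$ to $\fkh^\ast$. You instead give a self-contained extremal argument: decompose $\eta=k\Lm_0+\ov\eta+t\delta$, note that $(\eta|\eta)=(\ov\eta|\ov\eta)+2kt$ with the classical form positive definite, so that $t$ is bounded above on the orbit when $k>0$ and takes values in the discrete set $t(\eta)+a_0^{-1}\bbz$ (since the orbit lies in $\eta+Q$), hence attains a maximum; classical dominance is then arranged by the finite group $\overset\circ W$, which fixes $t$, and the affine wall $\la h_0,\mu\ra\geq0$ falls out of the one-line $s_0$ computation against maximality of $t$. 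All the ingredients check out: $(\Lm_0|\Lm_0)=0$ is the standard normalization of $\Lm_0$ modulo $\bbq\delta$ (harmless, since dominance and the $W$-action are insensitive to $\delta$-shifts), $(\Lm_0|\delta)=1$ follows from $a_0^\vee=1$, positive definiteness on the span of $\alpha_1,\dots,\alpha_\ell$ holds because that subdiagram is of finite type, and $\alpha_0$ is the only simple root with nonzero $\delta$-coefficient, namely $1/a_0$. What each approach buys: the paper's argument is two lines long but hides the work in Kac's theorem and requires the slightly delicate dualization between $\fkg$ and ${}^t\fkg$ (the Tits cone lives in $\fkh$, not $\fkh^\ast$); yours is longer but elementary and transparent, it isolates exactly where $\la c,\eta\ra>0$ is used (boundedness of $t$, which fails at level $\leq0$), and it constructively exhibits the dominant representative — indeed it is essentially the technique Kac himself uses elsewhere (cf.\ the proof of \cite[Proposition 6.6]{Ka}), re-derived rather than cited.
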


\begin{proof}
This is essentially the result of \cite[Proposition 5.8(a)]{Ka}, though one must interchange $\fkg$ with its dual. What Kac proves in [\textit{loc.\ cit.}] is that the Tits cone
\[
W\cdot\left\{h\in \fkh_\bbr\ \middle|\ \la h,\alpha_i\ra\geq0\text{ for all }i\right\}
\]
includes all elements $h\in\fkh_\bbr$ for which $\la h,\delta\ra>0$. Applying this result to the algebra ${}^t\fkg$ dual to $\fkg$ (that is, the Kac--Moody algebra whose Cartan matrix is the transpose of the Cartan matrix of $\fkg$), and then interchanging $h_i$ and $\alpha_i$ for each $i$, one obtains that for $\fkg$ the cone
\[
W\cdot\left\{\lm\in\fkh^\ast_\bbr\ \middle|\ \la h_i,\lm\ra\geq0\text{ for all }i\right\}
\]
includes all elements $\eta\in\fkh^\ast_\bbr$ for which $\la c,\eta\ra>0$.
\end{proof}

Now we can show that, modulo $\delta$, $P(\Lm)$ coincides with $\Lm-Q$.

\begin{prop}\label{inclusive}
Suppose $\eta\in\Lm-Q$. Then there is some $s\in\bbz$ such that $\eta+s\delta\in P(\Lm)$.
\end{prop}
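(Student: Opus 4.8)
The plan is to reduce to the case of a dominant weight, where membership in $P(\Lm)$ is controlled by the single condition $\mu\le\Lm$, and then to push such a weight down by $\delta$ without disturbing dominance. I would begin by recording two elementary facts. Since $\la c,\alpha_i\ra=0$ for every $i$, the central element $c$ annihilates $Q$, so every weight in $\Lm-Q$ — and in particular $\eta$ — has level $\la c,\eta\ra=\la c,\Lm\ra=k>0$; thus $\eta$ is eligible for \thmref{tits}. Secondly, because $W$ fixes $\delta$ we have $s_i\delta=\delta-\la h_i,\delta\ra\alpha_i=\delta$, forcing $\la h_i,\delta\ra=0$ for all $i$. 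Consequently adding any integer multiple of $\delta$ leaves every value $\la h_i,\cdot\ra$ unchanged, so it preserves both integrality and dominance. I would also note that $\delta=\sum_i a_i\alpha_i$ has strictly positive coefficients $a_i$.

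Next I would apply \thmref{tits} to the positive-level weight $\eta$ to produce $w\in W$ with $\eta^+:=w\eta\in P_+$. Each generator $s_i$ moves a weight by a multiple of $\alpha_i$, so $W$ shifts weights only within their $Q$-coset; as $\eta\in\Lm-Q$ this gives $\eta^+\in\Lm-Q$, say $\eta^+=\Lm-\beta$ with $\beta\in Q$ (not necessarily in $Q_+$). Because $w$ fixes $\delta$ and $P(\Lm)$ is $W$-invariant, it suffices to find $s$ with $\eta^++s\delta\in P(\Lm)$: applying $w\1$ then yields $\eta+s\delta=w\1(\eta^++s\delta)\in P(\Lm)$, which is exactly what is required.

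Finally, $\eta^++s\delta=\Lm-(\beta-s\delta)$ is dominant integral for every $s$ by the second observation above, and writing $\beta=\sum_i m_i\alpha_i$ we get $\beta-s\delta=\sum_i(m_i-sa_i)\alpha_i$. Since all $a_i>0$, choosing $s$ sufficiently negative makes every coefficient $m_i-sa_i$ non-negative, so $\beta-s\delta\in Q_+$ and hence $\eta^++s\delta\le\Lm$. The argument is then closed by invoking the standard description of the dominant weights of an integrable highest-weight module, namely that a dominant integral weight $\mu$ lies in $P(\Lm)$ precisely when $\Lm-\mu\in Q_+$ (see \cite[Ch.\ 11]{Ka}). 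This last input is the only genuinely representation-theoretic ingredient, and it is the step I expect to need the most care to state and cite correctly; everything else is lattice bookkeeping, with the one point meriting attention being the verification (via $\la h_i,\delta\ra=0$) that the $\delta$-shifts keep us inside $P_+$.
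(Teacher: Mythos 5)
Your argument is correct and follows essentially the same route as the paper: reduce to the dominant case via Proposition~\ref{tits} together with the $W$-invariance of $\delta$ and of $P(\Lm)$, then settle the dominant case by noting that $\eta+s\delta$ stays dominant and satisfies $\eta+s\delta\leq\Lm$ for sufficiently negative $s$, invoking \cite[Proposition 11.2]{Ka}. You merely make explicit some details the paper leaves implicit (positivity of the level, $\la h_i,\delta\ra=0$, and the positivity of the marks $a_i$), all of which check out.
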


\begin{proof}
In the case where $\eta$ is a dominant weight, this follows from \cite[Proposition 11.2]{Ka}, since then every weight $\eta+s\delta$ is dominant, and for sufficiently small $s$ we must have $\eta+s\delta\leq\Lm$.

In general, we have $\la c,\eta\ra=\la c,\Lm\ra=k>0$, and so by Proposition \ref{tits}, $\eta$ is the image of some dominant weight $\xi$ under the action of the Weyl group. Since the Weyl group action involves adding elements of $Q$, $\xi$ also lies in $\Lm-Q$, and so the present proposition holds for $\xi$. Since $\delta$ and $P(\Lm)$ are fixed by the Weyl group action, the result holds for $\eta$ too.
\end{proof}

\begin{defn}\label{delta-shift}
Fix a dominant integral weight $\Lm$. By Proposition \ref{inclusive}, for every weight $\eta \in \Lm -Q$ there exists an integer $s$ such that $\eta +s\delta \in P(\Lm)$. The largest such integer will be denoted by $s(\eta)$ and will be called the \emph{$\delta$-shift} of $\eta$.
\end{defn}

The basis for the membership criterion is a calculation of this $\delta$-shift, combined with the following lemma.

\begin{lem}\label{moddelta}\indent
\begin{enumerate}
\vspace{-\topsep}
\item If $\eta = \Lm-\alpha$ for $\alpha \in Q$, then the $\delta$-shift $s(\eta)$ is well-defined and $\eta +s(\eta)\delta$ is a maximal weight.
\item The weight $\eta$ lies in $P(\Lm)$ if and only if $s(\eta) \geq 0$. In this case, $\alpha \in Q_+$.
\end{enumerate}
\end{lem}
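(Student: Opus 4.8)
The plan is to handle the two parts in turn, leaning on the two structural facts about $P(\Lm)$ recalled in the excerpt: the inclusion $P(\Lm)\subseteq\Lm-Q_+$, and the description $P(\Lm)=\{\lm-s\delta\mid\lm\in\max(\Lm),\ s\in\bbz_{\ge0}\}$.

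For well-definedness of $s(\eta)$ in part (1), I would argue that the set $S=\{s\in\bbz\mid\eta+s\delta\in P(\Lm)\}$ is nonempty and bounded above, so that its maximum exists. Nonemptiness is precisely the content of Proposition \ref{inclusive}. For the upper bound, write $\alpha=\sum_i c_i\alpha_i$ with $c_i\in\bbz$ and recall that $\delta=\sum_i a_i\alpha_i$ with every coefficient $a_i>0$. Then $\eta+s\delta=\Lm-\sum_i(c_i-sa_i)\alpha_i$, and membership of $\eta+s\delta$ in $P(\Lm)\subseteq\Lm-Q_+$ forces $c_i-sa_i\ge0$, i.e. $s\le c_i/a_i$, for each $i$. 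Hence $S$ is bounded above by $\min_i c_i/a_i$, and $s(\eta)=\max S$ is well-defined. Setting $\mu=\eta+s(\eta)\delta\in P(\Lm)$, maximality is then immediate from the extremality of $s(\eta)$: since $s(\eta)+1\notin S$, we have $\mu+\delta=\eta+(s(\eta)+1)\delta\notin P(\Lm)$, which is exactly the condition for the weight $\mu\in P(\Lm)$ to be maximal.

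For part (2), the forward implication is trivial: if $\eta\in P(\Lm)$ then $0\in S$, so $s(\eta)\ge0$. Conversely, if $s(\eta)\ge0$, I would invoke part (1) to write $\eta=\mu-s(\eta)\delta$ with $\mu\in\max(\Lm)$ and $s(\eta)\in\bbz_{\ge0}$; this presents $\eta$ in exactly the form $\lm-s\delta$ appearing in the structural description of $P(\Lm)$, whence $\eta\in P(\Lm)$. Finally, assuming $\eta\in P(\Lm)$, the inclusion $P(\Lm)\subseteq\Lm-Q_+$ gives $\alpha=\Lm-\eta\in Q_+$.

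There is no deep obstacle here; the one point needing genuine care is the boundedness-above step, which is where the positivity of all the coefficients $a_i$ of $\delta$ is indispensable, together with keeping straight the direction in which the $\delta$-shift must move in order to preserve non-negativity of every $\alpha_i$-coefficient.
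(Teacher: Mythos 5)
Your proposal is correct and follows essentially the same route as the paper's own proof: non-emptiness of the set of admissible shifts comes from Proposition \ref{inclusive}, the upper bound comes from the positivity of the coefficients of $\delta$ together with the inclusion $P(\Lm)\subseteq\Lm-Q_+$ (you spell out the bound $s\le c_i/a_i$ where the paper compresses this to ``$s$ is bounded above by the minimal coefficients in $\alpha$''), and part (2) rests on the description $P(\Lm)=\{\lm-s\delta\mid\lm\in\max(\Lm),\,s\in\bbz_{\ge0}\}$. The only cosmetic difference is that you deduce $\alpha\in Q_+$ directly from $P(\Lm)\subseteq\Lm-Q_+$, whereas the paper writes $\alpha=\alpha'+s(\eta)\delta$ with $\zeta=\Lm-\alpha'$ maximal and $\alpha'\in Q_+$; these amount to the same observation.
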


\begin{proof}\indent
\begin{enumerate}
\vspace{-\topsep}
\item
By Proposition \ref{inclusive}, $\eta+s\delta \in P(\Lm)$, for some $s$. We must have $\alpha -s\delta \in Q_+$, so $s$ is bounded above by the minimal coefficients in $\alpha$, which shows that $s(\eta)$, the maximum of all the $s$, must exist. If $\zeta = \eta+s(\eta)\delta$ is not a maximal weight, then $\zeta +\delta= \eta+(s(\eta)+1)\delta \in P(\Lm)$, in contradiction to the maximality of $s(\eta)$.

\item We have $\eta =\zeta-s(\eta)\delta$. As noted above, the weights in $P(\Lm)$ are precisely the weights $\zeta-s\delta$ for $s \geq 0, \zeta \in \max(\Lm)$. Since
$\zeta = \Lm - \alpha'$ for $\alpha' \in Q_+$, we surely have $\alpha = \alpha'+s(\eta)\delta \in Q_+$.\qedhere
\end{enumerate}
\end{proof}

Now recall from \S\ref{weylsec} the decomposition $W=T \rtimes \overset \circ W$, and the integers $d_1,\dots,d_\ell$ defining the lattice $M$.

\begin{defn}\indent
Suppose $\Lm$ is of level $k$.
\begin{itemize}
\item
For any $b=(b_0,\dots,b_\ell) \in \mathbb Z^{\ell+1}$, not necessarily non-negative, let $\eta(b)$ be the weight $\Lm-(b_0\alpha_0+\dots+b_\ell\alpha_\ell)$.
\item
For any $\eta = \eta(b)$, let
$$\tilde \eta= \big((a_0b_1-a_1b_0) \amod kd_1,\dots, (a_0b_\ell-a_\ell b_0)\amod kd_\ell\big) \in \prod_{i=1}^\ell (\mathbb Z/kd_i \mathbb Z).$$
Note that if $\eta,\zeta\in\Lm-Q$, with $\zeta-\eta=\sum_{i=0}^\ell c_i\alpha_i$, then we have $(\tilde\zeta-\tilde\eta)_i\equiv a_ic_0-a_0c_i\amod kd_i$.
\end{itemize}
\end{defn}

Let $N$ be the set of maximal dominant weights, and let $\tilde N=\overset \circ W \cdot N$ be the union of the orbits under the finite Weyl group $\overset \circ W$. Say that two elements of $\tilde N$ lie in the same \emph{$T$-class} if one can be moved to the other by the action of an element of $T$, and let $\bar N$ be a set of representatives of the $T$-classes of $\tilde N$.

\begin{prop}\label{zeta}
Suppose $\Lm$ is a a dominant integral weight of level $k$.
\begin{enumerate}
\item
If $\eta,\zeta\in\Lm-Q$, then we have $\eta = t_\alpha(\zeta)-s\delta$ for some $\alpha \in M$ and some $s\in\bbz$ if and only if $\tilde \eta = \tilde \zeta$.
\item
If $f:\bar N \rightarrow\prod_{i=1}^\ell (\mathbb Z/kd_i \mathbb Z)$ denotes the restriction of $\tilde \cdot$, then $f$ is injective.
\item
If $\fkg$ is of any type other than $A^{(2)}_{2\ell}$, then $f$ is surjective, and thus $\bar N$ has $k^\ell \prod_{i=1}^\ell d_i$ elements.
\item
If $\fkg=\fkg(A^{(2)}_{2\ell})$, then the image of $f$ is $(2\bbz/2k\bbz)^{\ell-1}\times\bbz/k\bbz$, and thus $\bar N$ has $k^{\ell}$ elements.
\end{enumerate}
\end{prop}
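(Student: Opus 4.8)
The plan is to handle the four parts in turn, with parts (3) and (4) sharing a common reduction. For part (1) I would argue by a direct computation with contents. Writing $\zeta=\eta(z)$ and taking a general $\alpha=\sum_{i=1}^\ell (m_id_i/a_0)\alpha_i\in M$ with $m_i\in\bbz$ and arbitrary $s\in\bbz$, I substitute into the formula for $t_\alpha$ and read off the coefficient $b_i$ of each $\alpha_i$ in $\Lm-(t_\alpha(\zeta)-s\delta)$. The two $\delta$-contributions (the term $((\zeta|\alpha)+\tfrac12(\alpha|\alpha)k)\delta$ from $t_\alpha$ and the $-s\delta$) enter as a common multiple of $(a_0,\dots,a_\ell)$, so they cancel in the combination $a_0b_i-a_ib_0$; what survives is $a_0z_i-a_iz_0-km_id_i\equiv a_0z_i-a_iz_0\pmod{kd_i}$, which is the \emph{only if} direction. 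For \emph{if}, given $\tilde\eta=\tilde\zeta$ I write $\zeta-\eta=\sum c_i\alpha_i$; the congruence recorded after the definition of $\tilde\cdot$ then says $a_ic_0-a_0c_i\equiv 0\pmod{kd_i}$, so $m_i:=(a_ic_0-a_0c_i)/(kd_i)\in\bbz$ and $\alpha:=\sum(m_id_i/a_0)\alpha_i\in M$. Running the computation backwards shows $t_\alpha(\zeta)-\eta$ is a rational multiple of $\delta$, and since both weights lie in $\Lm-Q$ this multiple is an integer $s$, giving $\eta=t_\alpha(\zeta)-s\delta$.

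For part (2), note first that $\tilde\cdot$ is invariant under $\delta$-shifts (immediate from the definition) and, by part (1), under the $T$-action. Since $N\subseteq\max(\Lm)$ and $\overset\circ W\subseteq W$ fixes $\max(\Lm)$ (because $W$ fixes $P(\Lm)$ and $\delta$), all of $\tilde N=\overset\circ W\cdot N$ consists of maximal weights. Now take $x,y\in\bar N$ with $\tilde x=\tilde y$; by part (1), $x=t_\alpha(y)-s\delta$ for some $\alpha\in M$, $s\in\bbz$. But $t_\alpha(y)$ is again maximal, so $x$ and $t_\alpha(y)$ are two maximal weights in the same $\delta$-string, which contains a unique maximal weight (the weights along any $\delta$-string are bounded above). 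Hence $s=0$ and $x=t_\alpha(y)$, so $x$ and $y$ lie in one $T$-class; being representatives of distinct $T$-classes they coincide, which is injectivity.

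For parts (3) and (4) I would first show that the image of $f$ equals the image of the full map $\tilde\cdot\colon\Lm-Q\to\prod_i(\bbz/kd_i\bbz)$. Given any $\eta\in\Lm-Q$, Lemma \ref{moddelta}(1) $\delta$-shifts it to a maximal weight $\eta^*$, and Proposition \ref{tits} gives $w\in W$ with $w\eta^*\in P_+$, hence $w\eta^*\in N$ since $w$ preserves maximality. Writing $w=t_\alpha\overset\circ w$ and using $\overset\circ w^{-1}t_\alpha\overset\circ w=t_{\overset\circ w^{-1}\alpha}$ together with the $\overset\circ W$-invariance of $M$, I get $t_\beta(\eta^*)=\overset\circ w^{-1}(w\eta^*)\in\overset\circ W\cdot N=\tilde N$ with $\beta=\overset\circ w^{-1}\alpha\in M$; by part (1) this element shares the $\tilde\cdot$-value of $\eta^*$, hence of $\eta$. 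As $\tilde\cdot$ is constant on $T$-classes, the image of $f$ therefore coincides with the image of $\tilde\cdot$ on all of $\Lm-Q$, which is the subgroup of $\prod_i(\bbz/kd_i\bbz)$ generated by $a_0\hat e_i$ $(1\le i\le\ell)$ and $-\sum_{i=1}^\ell a_i\hat e_i$, where $\hat e_i$ generates the $i$-th factor.

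Finally I read off this image. For every affine type other than $A^{(2)}_{2\ell}$ one has $a_0=1$, so the generators $a_0\hat e_i=\hat e_i$ already exhaust the target; thus $f$ is surjective and $|\bar N|=\prod_i kd_i=k^\ell\prod_i d_i$. For $A^{(2)}_{2\ell}$ one has instead $a_0=2$, with $d_i=a_i=2$ for $i<\ell$ and $d_\ell=a_\ell=1$ (from Kac's tables); then $2\hat e_i$ generates $2\bbz/2k\bbz$ in each of the first $\ell-1$ factors, while in the last factor the generators $2\hat e_\ell$ and (the last entry of) $-\sum_ia_i\hat e_i=-\hat e_\ell$ together generate $\bbz/k\bbz$, and the remaining entries of that column stay inside the first $\ell-1$ factors; this yields image $(2\bbz/2k\bbz)^{\ell-1}\times\bbz/k\bbz$ and $|\bar N|=k^\ell$. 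I expect the main obstacle to be the bookkeeping in the common reduction for (3)/(4) --- correctly exploiting $W=T\rtimes\overset\circ W$ and the $\overset\circ W$-invariance of $M$ to land back inside $\tilde N$ --- together with pinning down the exact marks $a_i$ and integers $d_i$ for $A^{(2)}_{2\ell}$ that make part (4) come out as stated.
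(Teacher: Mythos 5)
Your proof is correct, and for parts (1) and (2) it is essentially the paper's own argument: the same content computation showing that the $\delta$-contributions cancel in the combination $a_0b_i-a_ib_0$, the same extraction of $\alpha\in M$ from the congruences with integrality of $s$ coming from $Q\cap\bbq\delta=\bbz\delta$, and the same maximality argument forcing $s=0$ in the injectivity step. For (3) and (4) you take a mildly different route. The paper argues target-by-target: given a tuple in $\prod_i(\bbz/kd_i\bbz)$, it writes down an explicit content $b$ realizing it (in type $A^{(2)}_{2\ell}$, two explicit contents according to the parity of $c_\ell$), then $\delta$-shifts into $P(\Lm)$, moves into $N$ by the Weyl group, and splits $w=t'w_0$, directly replacing $w_0\zeta'$ by its $T$-class representative, so that no conjugation identity is ever invoked. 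You instead prove once and for all that the image of $f$ equals the image of $\eta\mapsto\tilde\eta$ on all of $\Lm-Q$ --- using the same reduction (shift to a maximal weight, move to $N$ via Proposition \ref{tits}, exploit $W=T\rtimes\overset\circ W$), but implemented through the conjugation $\overset\circ w^{-1}t_\alpha\overset\circ w=t_{\overset\circ w^{-1}\alpha}$, which additionally requires the (true and standard, from Kac's construction of $T$) $\overset\circ W$-invariance of $M$ --- and then compute that image as the subgroup generated by the column vectors $a_0\hat e_i$ $(1\le i\le\ell)$ and $-\sum_{i}a_i\hat e_i$. What your packaging buys is uniformity: surjectivity in (3) and the image in (4) both fall out of a one-line subgroup computation, with the $A^{(2)}_{2\ell}$ parity phenomenon appearing structurally (all generators have even entries in the first $\ell-1$ coordinates, while $2\hat e_\ell$ and $-\hat e_\ell$ generate the last factor) rather than through an ad hoc choice of contents; the paper's explicit contents, by contrast, keep the argument elementary and avoid any appeal to the conjugation formula. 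Both versions of the endgame use the same data $a_0=1$ off type $A^{(2)}_{2\ell}$, and $a_0=2$, $a_i=d_i=2$ for $i<\ell$, $a_\ell=d_\ell=1$ in that type, and both yield the counts $k^\ell\prod_{i=1}^\ell d_i$ and $k^\ell$ stated in the proposition.
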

\begin{proof}\indent
\begin{enumerate}
\vspace{-\topsep}
\item First assume that $\eta = t_\alpha(\zeta)-s\delta$ for some $\alpha\in M$, and write
\[
\alpha=\sum_{i=1}^\ell \mfrac{n_id_i}{a_0}\alpha_i
\]
for integers $n_1,\dots,n_\ell$. From above, we have
\[
\eta = t_\alpha(\zeta)-s\delta=\zeta +k\alpha-\left((\zeta|\alpha)+\tfrac{1}{2}(\alpha|\alpha)k+s\right)\delta.
\]
Writing the coefficient $(\zeta|\alpha)+\tfrac{1}{2}(\alpha|\alpha)k+s$ as $N$, we get
\begin{align*}
\zeta-\eta&=N\delta-k\alpha\\
&=N\sum_{i=0}^\ell a_i\alpha_i-k\sum_{i=1}^\ell\mfrac{n_id_i}{a_0}\alpha_i;
\end{align*}
hence the $i$th component of $\tilde\zeta-\tilde\eta$ is
\[
\left(a_iNa_0-a_0\left(Na_i-k\mfrac{n_id_i}{a_0}\right)\right)\amod kd_i,
\]
which is zero.

Conversely, suppose that $\tilde \eta=\tilde\zeta$ with $\eta = \eta(b)$ and $\zeta=\eta(b')$; then we can write
$$(a_0b_1'-a_1 b_0',\dots,a_0b_\ell'-a_\ell b_0')-(a_0b_1-a_1b_0,\dots,a_0b_\ell-a_\ell b_0)=k(n_1d_1,\dots,n_\ell d_\ell)$$
for some $n_1,\dots,n_\ell\in\bbz$. Thus, if we set $\alpha=\mfrac{n_1d_1}{a_0}\alpha_1+\dots+\mfrac{n_\ell d_\ell}{a_0}\alpha_\ell$, then modulo $\bbq\delta$ we have
\begin{align*}
t_\alpha(\zeta)&\equiv \zeta+k\alpha\\
&\equiv \Lm-\sum_{i=0}^\ell b_i'\alpha_i+k\sum_{i=1}^\ell\mfrac{n_id_i}{a_0}\alpha_i\\
&\equiv \Lm-(b_0+(b'_0-b_0))\alpha_0-\sum_{i=1}^\ell\left(b_i'-\mfrac{a_0b_i'-a_ib_0'-a_0b_i+a_ib_0}{a_0}\right)\alpha_i\\
&\equiv \Lm-\sum_{i=0}^\ell b_i\alpha_i-\mfrac{b_0'-b_0}{a_0}\sum_{i=0}^\ell a_i\alpha_i\\
&\equiv \eta-\mfrac{b_0'-b_0}{a_0}\delta\\
&\equiv \eta.
\end{align*}
So $\eta = t_\alpha(\zeta)+s\delta$ for some rational number $s$; but since $\eta-t_\alpha(\zeta)\in Q$ and $Q\cap\bbq\delta=\bbz\delta$, $s$ is in fact an integer.
\item
If $\eta$ and $\zeta$ are in $\bar N$ with $f(\eta)=f(\zeta)$, then from (1) we have $\eta = t_\alpha(\zeta)+s\delta$ for some $s$. But since $\eta,\zeta$ lie in $\bar N$, they are both maximal weights and thus $s=0$, so they are both in the same $T$-class. Since $\bar N$ contains a unique representative of each $T$-class, $\eta=\zeta$ and thus $f$ is injective.
\item
Since $\fkg$ is not of type $A^{(2)}_{2\ell}$, we have $a_0=1$.  Given an element $(b_1,\dots,b_\ell) \in \prod_{i=1}^\ell (\mathbb Z/kd_i \mathbb Z)$, consider the content $b=(0,b_1,\dots,b_\ell)$ (regarding each $b_i$ as an integer in the range $\{0,\dots,kd_i-1\}$). By Lemma \ref{inclusive}, the corresponding weight $\eta(b)$ can be written as $t'w_0(\zeta')-s\delta$ for some $\zeta' \in N$. If $\zeta$ is the representative of the $T$-class of $w_0\zeta'$ in $\bar N$, then we replace $t'$ by another element $t \in T$ such that $t'w_0(\zeta')=t(\zeta)$, giving $\eta=t(\zeta)-s\delta$. By part (1) of this lemma, we have
$f(\zeta)=\tilde \zeta = \tilde \eta(b)=(b_1,\dots,b_\ell)$, so $f$ is indeed surjective.

The cardinality of $\bar N$ is now immediate, since the number of elements in $\prod_{i=1}^\ell(\mathbb Z/kd_i\mathbb Z)$ is $k^\ell\prod_{i=1}^\ell d_i$.
\item
In this case, we have $a_i=d_i=2$ for $i=1,\dots,\ell-1$, while $a_\ell=d_\ell=1$.  If $\eta=\eta(b)$, then for $i<\ell$ we have $\tilde\eta_i=2b_i-2b_0$, which is even; so the image of $f$ is certainly contained in $(2\bbz/2k\bbz)^{\ell-1}\times\bbz/k\bbz$.  Conversely, suppose we are given integers $c_1,\dots,c_\ell$ with $c_1,\dots,c_{\ell-1}$ even.  If $c_\ell$ is also even, then let $b$ be the content $\left(0,\mfrac{c_1}2,\dots,\mfrac{c_\ell}2\right)$ and let $\eta=\eta(b)$.  Repeating the argument from above, there is $\zeta\in\bar N$ with $f(\zeta)=\tilde\eta=(c_1,\dots,c_\ell)$.  If instead $c_\ell$ is odd, let $b=\left(1,\mfrac{c_1}2+1,\dots,\mfrac{c_{\ell-1}}2+1,\mfrac{c_\ell+1}2\right)$ and repeat the argument.\qedhere
\end{enumerate}
\end{proof}

Now we can give our main result.

\begin{thm}\label{main}
Suppose $\mathfrak g$ is an affine Lie algebra, with Weyl group $W=T \rtimes \overset \circ W$. Let $\Lambda$ be a dominant integral weight, $N$ the set of maximal dominant weights in $P(\Lm)$, and $\bar N$ a set of $T$-class representatives of the set $\overset \circ W \cdot N$.
For $\eta = \eta(b)$, let $\zeta$ be the unique element from $\bar N$ such that $\tilde\eta=\tilde\zeta$. Let $\alpha \in M$ and $s\in\bbz$ be determined by the formula
$\eta=t_\alpha(\zeta)-s(\eta)\delta$.
\begin{enumerate}
\item
Writing $\eta -\zeta=c_0\alpha_0+\dots+c_\ell \alpha_\ell$, we have
$$\alpha=\mfrac{1}{ka_0}\big((a_0c_1-a_1c_0)\alpha_1+\dots+(a_0c_\ell-a_\ell c_0)\alpha_\ell\big),$$
from which we can calculate
\[s(\eta)=-\mfrac{c_0}{a_0}-\left((\zeta|\alpha)+\tfrac{1}{2}(\alpha|\alpha)k\right).\]
\item
$\eta$ lies in $P(\Lm)$ if and only if $s(\eta)\geq0$.
\end{enumerate}
\end{thm}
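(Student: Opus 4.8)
The plan is to read off $\alpha$ and $s(\eta)$ from the defining relation $\eta=t_\alpha(\zeta)-s(\eta)\delta$ by a direct comparison of coefficients, and to treat separately the one genuinely non-formal point: that the shift occurring in that relation really is the $\delta$-shift of Definition \ref{delta-shift}.

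First I would dispose of the existence and uniqueness of $\zeta$. For $\eta=\eta(b)\in\Lm-Q$ the tuple $\tilde\eta$ lies in $\prod_{i=1}^\ell(\mathbb Z/kd_i\mathbb Z)$, and by Proposition \ref{zeta}(3), respectively the computation inside the proof of Proposition \ref{zeta}(4) for type $A^{(2)}_{2\ell}$, it lies in the image of $f$; hence some $\zeta\in\bar N$ has $\tilde\zeta=\tilde\eta$, and it is the only one by the injectivity in Proposition \ref{zeta}(2). Proposition \ref{zeta}(1) then supplies $\alpha\in M$ and $s\in\mathbb Z$ with $\eta=t_\alpha(\zeta)-s\delta$, and these are unique since the simple roots are linearly independent.

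For part (1) I would substitute the explicit formula for $t_\alpha$ and rearrange to
$$\eta-\zeta=k\alpha-\Big((\zeta|\alpha)+\tfrac12(\alpha|\alpha)k+s\Big)\delta.$$
Writing $\alpha=\sum_{i=1}^\ell\tfrac{n_id_i}{a_0}\alpha_i$, $\delta=\sum_{i=0}^\ell a_i\alpha_i$ and $\eta-\zeta=\sum_{i=0}^\ell c_i\alpha_i$, and equating coefficients of the linearly independent simple roots, the $\alpha_0$-coefficient forces the bracketed quantity to equal $-c_0/a_0$, while for $i\geq1$ the $\alpha_i$-coefficients give $\tfrac{n_id_i}{a_0}=\tfrac{a_0c_i-a_ic_0}{ka_0}$, which is exactly the asserted expression for $\alpha$. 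That this genuinely defines an element of $M$, i.e. that $a_0c_i-a_ic_0$ is divisible by $kd_i$, is precisely the hypothesis $\tilde\eta=\tilde\zeta$ read through the congruence recorded in the definition of $\tilde\cdot$. Substituting the value $-c_0/a_0$ of the bracket back in yields the stated formula for $s$.

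The step I expect to be the crux is the identification $s=s(\eta)$. Here I would show that $t_\alpha(\zeta)$ is a \emph{maximal} weight: $\zeta$ lies in $\overset\circ W\cdot N\subseteq\max(\Lm)$ because $\max(\Lm)$ is $W$-invariant, hence $\overset\circ W$-invariant, and $t_\alpha\in T\subseteq W$ likewise preserves $\max(\Lm)$. Thus $\eta+s\delta=t_\alpha(\zeta)$ is a maximal weight in the $\delta$-string through $\eta$. Since $P(\Lm)$ meets that string in a set closed under subtracting $\delta$, with largest element $\eta+s(\eta)\delta$ itself maximal (Lemma \ref{moddelta}(1)), and a $\delta$-string can contain only one maximal weight, we conclude $s=s(\eta)$, completing part (1). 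Finally part (2) is immediate, being the restatement of Lemma \ref{moddelta}(2).
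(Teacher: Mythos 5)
Your proposal is correct and follows essentially the same route as the paper's own proof: existence and uniqueness of $\zeta$ via Proposition \ref{zeta}, identification of $s$ with $s(\eta)$ from the $W$-invariance of $\max(\Lm)$ (so that $t_\alpha(\zeta)$ is maximal), extraction of $\alpha$ and $s(\eta)$ by comparing coefficients of the linearly independent simple roots after substituting the formula for $t_\alpha$, and Lemma \ref{moddelta}(2) for part (2). You merely make explicit a few points the paper leaves implicit, such as the surjectivity onto the image of $f$ in type $A^{(2)}_{2\ell}$, the uniqueness of the pair $(\alpha,s)$, and the divisibility of $a_0c_i-a_ic_0$ by $kd_i$.
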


\begin{proof}\indent
\begin{enumerate}
\vspace{-\topsep}
\item
By Proposition \ref{zeta}, there is a unique $\zeta \in \bar N$ such that $\tilde \eta = \tilde \zeta$. There is an $\alpha \in M$ and an $s \in \bbz$ such that $\eta = t_\alpha(\zeta)-s\delta.$ However, since $\zeta \in \max(\Lm)$, we get $s=s(\eta)$ by Definition \ref{delta-shift}.

Now we have
\begin{align*}
\sum_{i=0}^\ell c_i\alpha_i&=\eta -\zeta\\
&= t_\alpha(\zeta) -s(\eta)\delta -\zeta\\
&=k\alpha -\left((\zeta|\alpha)+\tfrac{1}{2}(\alpha|\alpha)k+s(\eta)\right)\delta.
\end{align*}
Since $\alpha$ lies in the span of $\alpha_1,\dots,\alpha_\ell$, we can compare coefficients of $\alpha_0$ to get
\[
-\left((\zeta|\alpha)+\tfrac{1}{2}(\alpha|\alpha)k+s(\eta)\right)a_0=c_0,
\]
from which we get
\begin{align*}
\alpha&=\mfrac1k\sum_{i=0}^\ell c_i\alpha_i-\mfrac{c_0}{a_0}\delta\\
&=\mfrac1{ka_0}\sum_{i=1}^\ell(a_0c_i-a_ic_0)\alpha_i
\end{align*}
and also
\[
s(\eta)=-\mfrac{c_0}{a_0}-\left((\zeta|\alpha)+\tfrac{1}{2}(\alpha|\alpha)k\right).
\]
\item Since $t(\zeta)$ lies in $\max(\Lm)$, we have $\eta \in P(\Lm)$ if and only if $s(\eta)$ is non-negative, by Lemma \ref{moddelta}.\qedhere
\end{enumerate}
\end{proof}

\begin{defn}\label{defect} \cite[3.11]{BK} The \textit{defect} of the weight $\eta=\Lambda-\alpha$ is given by
\[
\defe(\eta) = (\Lambda|\alpha)-\tfrac{1}{2}(\alpha|\alpha).
\]
\end{defn}

The defect is always non-negative, as was shown for a general Kac-Moody algebra in  \cite[Proposition 9.12]{Ka}

\begin{examp}

The weight $\Lm =\Lm_0 +\Lm_1$ for the twisted affine algebra $\fkg(A^{(2)}_4)$ is of level $3$. We represent the elements of $P(\Lm)$ as nodes in a graph, with coordinates corresponding to content. We wish to find the weights $\zeta =\eta(d) \in \bar N$.

Using the results of the next section, we can check that there are three maximal dominant weights in $P(\Lm)$, which we label by their contents as follows:
\[
a^0=(0,0,0),\qquad a^1=(1,1,0),\qquad a^2=(1,2,1).
\]
To find $\tilde N$, we apply the generators $s_1,s_2$ of $\overset\circ W$ to these three weights.  Each of $a_0,a_1,a_2$ yields a different orbit; these orbits have sizes $4,4,1$ respectively. 
Taking the nine elements $\zeta\in\tilde N$ and computing the vectors $\tilde\zeta$, we obtain a complete set of representatives for $(\mathbb Z/3\mathbb Z)^2$.
The weights $\eta(b)\in L(\Lm)$ with $b_0 \leq 2$ are depicted in Figure \ref{spin}, which is drawn as the projection of a three-dimensional model; for clarity, the only vertical lines drawn are those which would be visible in an opaque model, and only the maximal weights are labelled. The superscript on the content of any weight $\eta$ is its defect. Note that the defects are preserved by the action of the Weyl group, which acts by reflection on lines.

In Section \ref{s:examp}, we treat another example (in type $A^{(1)}_2$) in more detail.
\begin{figure}[htb]\label{spin}
\begin{center}
\setlength\ri{3.2cm}
\setlength\lf{2.2cm}
\setlength\rd{1.2cm}
\setlength\ld{1.2cm}
\setlength\dd{1.2cm}
\setlength\ofs{1mm}
\pagebreak

\[
\begin{tikzpicture}[scale=1.005]
\draw[lgreenx](-\lf,-2\dd-\ld)--++(-\lf,-\ld)--++(2\ri,-2\rd)--++(-3\lf,-3\ld)--++(\ri,-\rd)--++(\lf,\ld)--++(-2\ri,2\rd)--++(3\lf,3\ld)--cycle;
\draw[lgreenx](-3\lf+\ri,-2\dd-3\ld-\rd)--++(\ri,-\rd);
\draw[lbluex](0,-\dd)--++(-2\lf,-2\ld)--++(2\ri,-2\rd)--++(-2\lf,-2\ld);
\draw[lbluex](-\lf,-\dd-\ld)--++(\ri,-\rd)--++(-2\lf,-2\ld)--++(\ri,-\rd);
\foreach \x in {0,1,2}
\draw[greyx](2\ri-2\lf-\x\lf,-\dd-2\rd-2\ld-\x\ld)--++(0,-\dd);
\foreach \x in {0,1}
\draw[greyx](-2\lf+\x\ri-\x\lf,-\dd-\x\rd-2\ld-\x\ld)--++(0,-\dd);
\draw[greyx](-\lf+\ri,-\dd-\rd-\ld)--++(0,-\dd);
\draw[purplex](0,0)--++(-\lf,-\ld)--++(\ri,-\rd)--++(-\lf,-\ld);
\foreach \x in {0,1}
\foreach \y in {0,1}
\draw[greyx](-\x\lf-\y\lf+\y\ri,-\x\ld-\y\ld-\y\rd)--++(0,-\dd);
\draw(-\ofs-\lf,-2\dd-\ld)node[lgreenx,fill=white,inner sep=0pt]{\nod2100};
\draw(-2\lf,-2\dd-2\ld)node[lgreenx,fill=white,inner sep=0pt]{\nod2200};
\draw(-\lf+\ri,-2\dd-\ld-\rd)node[lgreenx,fill=white,inner sep=0pt]{\nod2110};
\draw(-4\lf+\ri,-2\dd-4\ld-\rd)node[lgreenx,fill=white,inner sep=0pt]{\nod2410};
\draw(-2\lf+2\ri,-2\dd-2\ld-2\rd)node[lgreenx,fill=white,inner sep=0pt]{\nod2220};
\draw(-5\lf+2\ri,-2\dd-5\ld-2\rd)node[lgreenx,fill=white,inner sep=0pt]{\nod2520};
\draw(-4\lf+3\ri,-2\dd-4\ld-3\rd)node[lgreenx,fill=white,inner sep=0pt]{\nod2430};
\draw(-5\lf+3\ri,-2\dd-5\ld-3\rd)node[lgreenx,fill=white,inner sep=0pt]{\nod2530};
\draw(0,-\dd)node[lbluex,fill=white,inner sep=0pt]{\nod1000};
\draw(-\lf,-\dd-\ld)node[lbluex,fill=white,inner sep=0pt]{\nod1101};
\draw(-2\lf,-\dd-2\ld)node[lbluex,fill=white,inner sep=0pt]{\nod1200};
\draw(-\lf+\ri,-\dd-\ld-\rd)node[lbluex,fill=white,inner sep=0pt]{\nod1111};
\draw(-2\lf+\ri,-\dd-2\ld-\rd)node[lbluex,fill=white,inner sep=0pt]{\nod1212};
\draw(-3\lf+\ri,-\dd-3\ld-\rd)node[lbluex,fill=white,inner sep=0pt]{\nod1311};
\draw(-2\lf+2\ri,-\dd-2\ld-2\rd)node[lbluex,fill=white,inner sep=0pt]{\nod1220};
\draw(-3\lf+2\ri,-\dd-3\ld-2\rd)node[lbluex,fill=white,inner sep=0pt]{\nod1321};
\draw(-4\lf+2\ri,-\dd-4\ld-2\rd)node[lbluex,fill=white,inner sep=0pt]{\nod1420};
\draw(0,0)node[purplex,fill=white,inner sep=0pt]{\nod0000};
\draw(-\lf,-\ld)node[purplex,fill=white,inner sep=0pt]{\nod0100};
\draw(-\lf+\ri,-\ld-\rd)node[purplex,fill=white,inner sep=0pt]{\nod0110};
\draw(\ofs-2\lf+\ri,-2\ld-\rd)node[purplex,fill=white,inner sep=0pt]{\nod0210};
\end{tikzpicture}
\]

\caption{}
\end{center}
\end{figure}
\end{examp}

\section{Finding the dominant weights in $P(\Lm)$ for affine Lie algebras}

In this section we describe how to find the dominant weights in the weight space $L(\Lm)$, for a fixed dominant integral weight $\Lm$.

\begin{defn}
We say that integral weights $\Lm$ and $\Lm'$ for $\fkg$ are \emph{equivalent}, written $\Lm \equiv \Lm'$, if $\Lm-\Lm' \in Q$.
\end{defn}

Since each element of $Q$ has level $0$, two equivalent weights must have the same level.  Moreover, since the set of weights of level $0$ is precisely the $\bbc$-span of $Q$, it is clear that two weights $\Lm,\Lm'$ of the same level differ by a linear combination of simple roots.  However, we do not have $\Lm'-\Lm \in Q$ unless all the coefficients are integers. We shall study this equivalence relation among integral weights of the same level below, treating each affine family separately.

\subsection{Positive hubs}

\begin{defn}
Suppose $\eta\in\fkh^\ast$.  The \emph{hub} of $\eta$ is the $(\ell+1)$-tuple $\theta(\eta)=(\theta_0,\dots,\theta_{\ell})$ defined by $\theta_i=\la h_i,\eta\ra$ for each $i$. We say that the hub of $\eta$ is \emph{positive} if each $\theta_i$ is non-negative.
\end{defn}

\begin{remark}\indent
\begin{enumerate}
\vspace{-\topsep}
\item
Note that $\theta(\eta)$ is just the projection of $\eta$ onto the first $\ell+1$ components in its representation with respect to the basis
\[B_H = \{\Lm_0,\Lm_1,\dots, \Lm_\ell, \delta\}\]
for $\fkh^\ast$. As such, it determines $\eta$ up to addition of a multiple of $\delta$; hence if $\theta$ is the hub of a weight in $P(\Lm)$, there will be a unique maximal weight in $P(\Lm)$ with hub~$\theta$.
\item
In an earlier work on cyclotomic Hecke algebras by the second author \cite{Fa}, the term ``hub'' was used for the negative of the hub defined here, and then the hubs of interest were the ``negative hubs''. We have reversed the sign here to make our work compatible with the conventional notation in affine Lie algebras.
\end{enumerate}
\end{remark}

\subsection{Finding the maximal dominant weights in $P(\Lm)$}

If we can find the set of all dominant weights $\Lm'$ equivalent to  $\Lm$, it is straightforward to find the maximal dominant weight for each one, using the following result.

\begin{prop}\label{maxcont}

Suppose $\eta$ is a dominant weight in $P(\Lm)$, and write $\eta=\Lm-\sum_i\gamma_i\alpha_i$.  If $\eta$ is maximal, then we have $\gamma_i<a_i$ for some $i$.
\end{prop}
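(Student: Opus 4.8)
The plan is to prove the contrapositive: I will show that if $\gamma_i \geq a_i$ for \emph{every} $i$, then $\eta$ cannot be maximal, i.e.\ that $\eta+\delta$ again lies in $P(\Lm)$. This reduces the statement to the known classification of the dominant weights of $L(\Lm)$.

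First I would record two elementary facts about the null root $\delta=\sum_i a_i\alpha_i$. Because $\delta$ generates the kernel of the Cartan matrix acting from the left, we have $\la h_i,\delta\ra=\sum_j a_{ij}a_j=0$ for every $i$; hence adding $\delta$ to a weight leaves its hub $\theta(\cdot)$ unchanged. In particular, since $\eta$ is dominant, so is $\eta+\delta$. Moreover $\delta\in Q$ and $(\delta|\delta)=0$, so $\eta+\delta$ is equivalent to $\Lm$ and has the same level.

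Next, under the assumption $\gamma_i\geq a_i$ for all $i$, I would simply compare coefficients:
\[
\eta+\delta=\Lm-\sum_i(\gamma_i-a_i)\alpha_i.
\]
Since each $a_i$ is a positive integer, every coefficient $\gamma_i-a_i$ is a non-negative integer, so $\Lm-(\eta+\delta)\in Q_+$; that is, $\eta+\delta\leq\Lm$. I would then invoke exactly the characterisation of dominant weights already used in the proof of Proposition~\ref{inclusive}: by \cite[Proposition 11.2]{Ka}, a dominant integral weight equivalent to $\Lm$ and $\leq\Lm$ necessarily lies in $P(\Lm)$. Applying this to the dominant weight $\eta+\delta$ gives $\eta+\delta\in P(\Lm)$, which contradicts the maximality of $\eta$ and establishes the claim.

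The argument is short, so the only point I would scrutinise carefully is the application of Kac's Proposition~11.2: I must confirm that $\eta+\delta$ satisfies all of its hypotheses, namely that it is integral, dominant, of the correct level, equivalent to $\Lm$, and below $\Lm$ in the dominance order. The dominance is exactly what the hub computation in the first step secures, and everything else is the routine coefficient comparison of the second step; there is no deeper obstacle beyond matching these hypotheses precisely.
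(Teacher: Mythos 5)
Your argument is correct and is essentially identical to the paper's proof: both pass to the contrapositive, observe that $\eta+\delta$ has the same hub as $\eta$ (hence is dominant) and satisfies $\eta+\delta\leq\Lm$ when every $\gamma_i\geq a_i$, and then invoke \cite[Proposition 11.2]{Ka} to conclude $\eta+\delta\in P(\Lm)$, contradicting maximality. Your explicit verification of the hypotheses of Kac's proposition is a careful touch but introduces no new idea beyond what the paper does.
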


\begin{proof}
For irreducible affine Kac--Moody algebras, \cite[Proposition 11.2]{Ka} tells us that the dominant weights in $P(\Lm)$ are precisely the dominant weights $\eta$ such that $\eta\leq\Lm$. If $\eta$ is a dominant weight in $\max(\Lm)$ with $\gamma_i\geq a_i$ for each $i$, then we have $\eta+\delta\leq\Lm$.  $\eta+\delta$ is a dominant weight (it has the same hub as $\eta$), and therefore must lie in $P(\Lm)$, contradicting the maximality of $\eta$.
\end{proof}

Using this proposition, we can find the maximal dominant weights in $P(\Lm)$ simply by finding which positive hubs occur as hubs of elements of $\Lm-Q$: given such a hub $\kappa$, there is a weight $\Lm'\in P(\Lm)$ with hub $\kappa$.  The unique maximal weight in $P(\Lm)$ with hub $\kappa$ is obtained by adding a multiple of $\delta$ to $\Lm'$, and Proposition \ref{maxcont} tells us what this multiple of $\delta$ must be, since there is a unique multiple of $\delta$ which will make each $\gamma_i$ non-negative with $\gamma_i<a_i$ for some $i$.

\begin{examp}
Suppose we are in type $D^{(1)}_5$, and $\Lm=\Lm_2$, which is of level $2$.  Using the results of the next section for this type, a positive  hub $\kappa=(\kappa_0,\dots,\kappa_5)$ is the hub of a weight in $\Lm-Q$ if and only if
\[
\kappa_0+\kappa_1+2\kappa_2+2\kappa_3+\kappa_4+\kappa_5=2
\]
and
\[\kappa_0-\kappa_1+2\kappa_2+2\kappa_4\in2+4\bbz.\]
It is easy to check that the positive hubs satisfying these criteria are
\[(0,0,1,0,0,0),\ \ (2,0,0,0,0,0),\ \ (0,2,0,0,0,0),\ \ (0,0,0,0,1,1).\]
The corresponding weights in $P(\Lm)$ are
\begin{align*}
&\Lm_2,\\
&\Lm_2-\alpha_1-2\alpha_2-2\alpha_3-\alpha_4-\alpha_5,\\
&\Lm_2-\alpha_0-2\alpha_2-2\alpha_3-\alpha_4-\alpha_5,\\
&\Lm_2-\alpha_0-\alpha_1-2\alpha_2-\alpha_3.
\end{align*}
\end{examp}

Now we show how to find all the maximal dominant elements of $P(\Lm)$.  We do this by first finding the hubs of these elements, and then considering the possible contents of these weights.

Since $\delta$ is an integral combination of the roots $\alpha_i$, the equivalence relation $\equiv$ on integral weights descends to an equivalence relation on hubs: given hubs $\theta,\kappa$, we can write $\theta\equiv\kappa$ if $\theta-\kappa\in\sum_i\mathbb Z\oa i$, where $\oa i$ denotes the hub of $\alpha_i$.  Our purpose is here is to give a simple description of this equivalence relation in terms of the entries of these hubs. It turns out that this can be done easily using a few simple congruences relating the coefficients of the hub, mostly modulo $2$.

In the following proposition, we use the labeling of simple roots from \cite[pp.\ 53--55]{Ka}; note that Cartan matrices have rows and columns numbered from $0$ to $\ell$.

\begin{prop}\label{smallpath}
Suppose $\fkg$ is an affine Lie algebra, and $\theta,\kappa$ are hubs of the same level. Let $\psi = \theta -\kappa$. Then $\psi \in\sum_i\oa i$ if and only if the coordinates of $\psi$ satisfy the congruences given in the following table.

\newlength\lenn
\setlength\lenn{312pt}
\begin{center}
\textup{\begin{longtable}{|c|c|}\hline
 Type&Congruences\\\endhead\hline
\str$A_\ell^{(1)} (\ell\geq1)$&
$\psi_1+2\psi_2+\dots+\ell\psi_\ell\equiv 0\pmod{\ell+1}$\\\hline
\str$B_\ell^{(1)}$ ($\ell\geq3$)&
$\psi_\ell\equiv\    0\pmod2$\\\hline
\str$C^{(1)}_\ell$ ($\ell \geq 2$)&
$\psi_0+\psi_2+\psi_4+\dots\equiv 0\pmod2$\\\hline
\raisebox{0pt}[21pt][14pt]{}$D^{(1)}_\ell$ ($\ell \geq 4$, $\ell$ even)&
\begin{tabular}{c}$\psi_0+\psi_1\equiv 0\pmod2$ and\\
$\psi_1+\psi_3+\psi_5+\dots+\psi_{\ell-1}\equiv 0\pmod2$\end{tabular}\\\hline
\str$D^{(1)}_\ell$ ($\ell \geq 5$, $\ell$ odd)&
$\psi_0-\psi_1+2\psi_2+2\psi_4+2\psi_6+\dots+2\psi_{\ell-1}\equiv0\pmod4$\\\hline
\str$D^{(2)}_{\ell+1}$ ($\ell \geq 2$)&
$\psi_0\equiv 0\pmod2$\\\hline
\str$A^{(2)}_{2\ell-1}$ ($\ell \geq 3$)&
$\psi_1+\psi_3+\psi_5+\dots\equiv 0\pmod2$\\\hline
\str$A^{(2)}_{2\ell}$ ($\ell \geq 1$)&
None\\\hline
\str$E^{(1)}_{6}$&
$\psi_0+2\psi_6\equiv \psi_5+2\psi_4\pmod3$\\\hline
\str$E^{(1)}_{7}$&
$\psi_0+\psi_2 +\psi_7\equiv 0 \pmod2$\\\hline
\str$E^{(1)}_{8}$&
None\\\hline
\str$E^{(2)}_{6}$&
None\\\hline
\str$F^{(1)}_{4}$&
None\\\hline
\str$G^{(1)}_{2}$&
None\\\hline
\str$D^{(3)}_{4}$&
None\\\hline
\end{longtable}}
\end{center}
\end{prop}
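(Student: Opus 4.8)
The plan is to reformulate the membership condition $\psi\in\sum_i\bbz\oa i$ as a statement about the integer column span of the Cartan matrix. Since $\la h_j,\alpha_i\ra=a_{ji}$, the hub $\oa i$ of the simple root $\alpha_i$ is precisely the $i$th column of $A=[a_{ij}]$; hence $\sum_i\bbz\oa i$ is the image lattice $A\bbz^{\ell+1}$, and $\psi\in\sum_i\bbz\oa i$ if and only if the system $Ac=\psi$ has an integral solution $c$. Because each column of $A$ is annihilated by $c=\sum_i a_i^\vee h_i$ (that is, $\la c,\alpha_i\ra=0$), every element of this lattice has level $0$, so the hypothesis that $\theta$ and $\kappa$ have the same level—i.e. that $\psi$ lies in the level-$0$ lattice $L_0=\{\psi\in\bbz^{\ell+1}\mid\sum_i a_i^\vee\psi_i=0\}$—is a genuine necessary condition. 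Both $L=A\bbz^{\ell+1}$ and $L_0$ have rank $\ell$, since the kernel of $A$ is spanned by the null-root coefficients $(a_0,\dots,a_\ell)$, so $L_0/L$ is a finite abelian group, and the task is to show that the congruences in the table are a set of defining relations for it.

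Necessity is then a direct, type-by-type verification: for each affine type I would take the explicit Cartan matrix from \cite[pp.\ 53--55]{Ka} and check that every generator $\oa i$ satisfies each listed congruence. Since the congruences are $\bbz$-linear conditions modulo a fixed integer, validity on the generators propagates to all integer combinations, giving $L\subseteq L'$, where $L'\subseteq L_0$ denotes the sublattice cut out by the congruences.

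For sufficiency I would avoid solving $Ac=\psi$ coordinate by coordinate and instead compare indices. First note that $L_0$ is the saturation of $L$ in $\bbz^{\ell+1}$: over $\bbq$ the column space of $A$ is the orthogonal complement $\{x\mid\sum_i a_i^\vee x_i=0\}$ of the left kernel $\bbq c$, which is exactly $\bbq L_0$, so $L_0=\bbq L\cap\bbz^{\ell+1}$. Hence $L_0/L$ is the torsion subgroup of $\coker A$, and its order is the product of the nonzero elementary divisors of $A$; conceptually this is the order of the fundamental group $P/Q$ of the finite root system obtained by deleting the affine node, which explains the numbers $\ell+1,2,4,3,\dots$ appearing in the table. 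On the other hand, the congruences cut out a subgroup $L'$ with $L\subseteq L'\subseteq L_0$, and in each row some coordinate of $\psi$ occurs with a coefficient that is a unit modulo the stated modulus, so each congruence stays surjective onto its cyclic target even after the level-$0$ relation is imposed; thus $[L_0:L']$ equals the product of the moduli in that row. Computing both quantities type by type and checking that $[L_0:L']=|L_0/L|$ forces $L'=L$ and establishes both implications at once.

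The main obstacle is the bookkeeping for the twisted families and for the parity split in type $D_\ell^{(1)}$, where surjectivity (not merely the inclusion $L\subseteq L'$) is the point at which the argument can silently fail. For $A_{2\ell}^{(2)}$ one has $a_0=2$ and $a_i=d_i=2$ for $i<\ell$, so the level functional and the mod-$2$ structure interact—this is the same phenomenon that makes the image of $f$ in Proposition \ref{zeta} a proper subgroup of $\prod_i(\bbz/kd_i\bbz)$—and one must confirm that the ``None'' entry genuinely corresponds to $L=L_0$ here. Likewise, in type $D_\ell^{(1)}$ the fundamental group is $\bbz/4$ for $\ell$ odd but $\bbz/2\times\bbz/2$ for $\ell$ even, so the single mod-$4$ congruence must be replaced by two independent mod-$2$ congruences, and one must verify that the two chosen linear forms are independent modulo $2$ and jointly surject onto $(\bbz/2)^2$ within $L_0$. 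I would therefore treat the index computation, rather than the necessity check, as the crux of the proof.
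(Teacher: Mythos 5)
Your proposal is correct, and its sufficiency half takes a genuinely different route from the paper's. The necessity direction coincides: both you and the paper identify $\oa i$ with the $i$th column of the Cartan matrix and verify the congruences on these generators. For sufficiency, the paper works constructively, type by type, eliminating coordinates of $\psi$ by subtracting integer multiples of the $\oa i$ in a carefully chosen order until the residual hub is visibly in $\bar Q$ (a multiple of an explicit element such as $\oa\ell+2\oa{\ell-1}+\dots+\ell\oa1$ in type $A^{(1)}_\ell$, or zero outright). You instead sandwich $L=A\bbz^{\ell+1}\subseteq L'\subseteq L_0$ and force $L'=L$ by matching $[L_0:L]$ --- the product of the nonzero elementary divisors of $A$, correctly justified by your observation that $L_0$ is the saturation of $L$, so $L_0/L$ is the torsion of $\coker A$ --- against $[L_0:L']$, the order of the image of the congruence map restricted to $L_0$. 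This buys a uniform scheme (one Smith-form computation per type replaces the ad hoc elimination orders) and it certifies in one stroke that the listed congruences are complete, not merely necessary. It costs two checks that you rightly isolate as the crux: surjectivity on $L_0$ does not follow from a unit coefficient alone, because the level-zero relation couples the coordinates --- in odd $D^{(1)}_\ell$, for instance, $\psi_0=1,\psi_1=-1$ is level zero but only reaches $2$ modulo $4$, while $\psi_0=1,\psi_\ell=-1$ reaches a generator --- and in even $D^{(1)}_\ell$ one needs joint surjectivity onto $(\bbz/2)^2$, which holds (the vectors with $\psi_0=1,\psi_{\ell-1}=-1$ and $\psi_0=1,\psi_\ell=-1$ hit $(1,1)$ and $(1,0)$). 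One caveat: your parenthetical identification of $|L_0/L|$ with the connection index of the finite system obtained by deleting node $0$ fails exactly in type $A^{(2)}_{2\ell}$, where that finite system has index $2$ but the torsion of $\coker A$ is trivial (already visible for $A^{(2)}_2$, whose matrix contains a $-1$ entry, forcing first elementary divisor $1$), consistent with the table's ``None''; since you defer to direct type-by-type computation and flag this very case as needing confirmation, this is a slip in the motivation rather than a gap in the argument.
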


\begin{proof}
For brevity we will denote $\sum_i\bbz\oa i$ by $\bar Q$.  The hypothesis that $\theta$ and $\kappa$ have the same level implies that $\psi$ has level $0$.  We wish to show that this fact and the given additional congruences are equivalent to the statement that $\psi \in\bar Q$.  That an element of $\bar Q$ must satisfy the given congruences follows from a straightforward inspection of the Cartan matrix; conversely, given $\psi$ satisfying the required congruences, our technique will be to subtract integer multiples of various $\oa i$ until we are left with a hub (necessarily also of level $0$ and also satisfying the required congruences) in which most of the coordinates are zero.  It will then be straightforward to see that the latter hub lies in $\bar Q$.
\begin{description}
\item[Type $A^{(1)}_\ell$]
In this case, the fact that the level of $\psi$ is zero says that
\[
\psi_0+\psi_1+\dots+\psi_\ell=0.
\]
In type $A^{(1)}_\ell$ the $j$th coordinate of $\oa i$ is $2\delta_{ij}-\delta_{i(j-1)}-\delta_{i(j+1)}$ (reading subscripts modulo $\ell+1$).  So if we sum $j$ times the $j$th coordinate over $j$, we get $2i-(i-1)-(i+1)\equiv0$ modulo $\ell+1$.  Hence if $\psi\in\bar Q$ then $\sum_jj\psi_j\equiv 0\pmod{\ell+1}$ as required.

Conversely, suppose $\psi$ is a hub of level $0$ for which $\sum_jj\psi_j$ is divisible by $\ell+1$.  We can eliminate all but two of the coordinates of $\psi$ by subtracting multiples of the $\oa i$.  Specifically (if $\ell>1$) we eliminate the $i$th coordinate of $\psi$ for $i=\ell,\ell-1,\dots,2$ in turn, by subtracting an appropriate multiple of $\oa{i-1}$.

We are left with a hub $\psi'$ which satisfies $\psi'_i=0$ for $i>1$ and which differs from $\psi$ by an element of $\bar Q$.  Since $\psi'$ is of level $0$, we have $\psi'_0+\psi'_1=0$.  And $\psi'$ satisfies the same congruence as $\psi$, so $\psi'_1$ (and hence also $\psi'_0$) is divisible by $\ell+1$.

But now consider the hub $\phi=\oa\ell+2\oa{\ell-1}+3\oa{\ell-2}+\dots+\ell\oa1\in\bar Q$.  For $j\geq2$ we have
\begin{align*}
\phi_j&=-(j-2)+2(j-1)-j=0,\\
\intertext{while}
\phi_1&=-(\ell-1)+2\ell=\ell+1,\\
\phi_0&=-1-\ell.
\end{align*}
Hence we have $\psi'=\frac1{\ell+1}\psi'_1\phi\in\bar Q$ and this, by the construction of $\psi'$ from $\psi$, implies the desired result that $\psi \in \bar Q$.
\item[Type $B^{(1)}_\ell$]
In this case, since the level of $\psi$ is $0$, we have
\[\psi_0+\psi_1+2(\psi_2+\dots+\psi_{\ell-1})+\psi_\ell=0.\]
The coordinates of $\oa i$ are given by the $i$th column of the Cartan matrix
\setlength\lc{0pt}
\setlength\lo{-5pt}
\newcommand\pb{\hbox to 15pt{\vbox to 15pt{\ }}}
\[
\brac{\begin{array}{c@{\hspace{\lc}}c@{\hspace{\lc}}c@{\hspace{\lc}}c@{\hspace{\lc}}c@{\hspace{\lc}}c@{\hspace{\lc}}c}
2&0&\mo&0&\pb&\cdots&0\\[\lo]
0&2&\mo&0&\pb&\cdots&0\\[\lo]
\mo&\mo&2&\mo&0&\pb&\vdots\\[\lo]
0&0&\mo&2&\ddts&\ddts&\pb\\[\lo]
\pb&\pb&0&\ddts&\ddts&\mo&0\\[\lo]
\vdots&\vdots&\pb&\ddts&\mo&2&\mo\\[\lo]
0&0&\cdots&\pb&0&\mt&2
\end{array}}
\]
Since the $\ell$th entry in each column is even, we must have $\psi_\ell \equiv 0 \ppmod 2$ if $\psi \in \bar Q$.

Conversely, suppose $\psi$ has level $0$ and $\psi_\ell$ is even. We eliminate the coordinates of $\psi$ by subtracting integral linear combinations of the $\oa i$, iteratively, as follows.  We eliminate $\psi_\ell$ by subtracting $\frac12\psi_\ell$ times $\oa\ell$, getting a weight $\psi'$ which is still of level $0$ and has $\psi'_\ell=0$. Then for each $i=\ell-1,\ell-2,\dots,2$ we can eliminate the $i$th coordinate by subtracting an appropriate multiple of $\oa{i-1}$. 
This leaves us with a weight $\psi''$ which has $\psi''_i=0$ unless $i=0$ or $1$, and (since $\psi''$ has level $0$) $\psi''_0+\psi''_1=0$.  But now we have $\psi''=\psi''_1(\oa1+\oa2+\dots+\oa\ell)$, as required.

\item[Type $C^{(1)}_\ell$]
In this case, the condition that $\psi$ is of level $0$ becomes
\[\psi_0+\dots+\psi_\ell=0.\]
For this type, the Cartan matrix is as follows.

\[
\brac{\begin{array}{c@{\hspace{\lc}}c@{\hspace{\lc}}c@{\hspace{\lc}}c@{\hspace{\lc}}c@{\hspace{\lc}}c}
2&\mo&0&\pb&\cdots&0\\[\lo]
\mt&2&\mo&0&\pb&\vdots\\[\lo]
0&\mo&2&\ddts&\ddts&\pb\\[\lo]
\pb&0&\mo&\ddts&\mo&0\\[\lo]
\vdots&\pb&\ddts&\ddts&2&\mt\\[\lo]
0&\cdots&\pb&0&\mo&2
\end{array}}
\]
We first note that in each column, the sum of the even-numbered coordinates is even.  Thus if $\psi \in \bar Q$, the required congruence holds.

Conversely, let us assume that the sum of the even-numbered coordinates of $\psi$ is even. For $i=\ell,\ell-1,\dots,2$ in turn we can eliminate the $i$th coordinate of $\psi$ by subtracting an appropriate multiple of $\oa{i-1}$ from $\psi$.  This leaves us with a weight $\psi'$ which has $\psi'_i=0$ for $i>1$; $\psi'$ also satisfies the given congruence, which means that $\psi'_0$ is even, and $\psi'$ has level $0$, which means that $\psi'_0+\psi'_1=0$.  Hence $\psi'=\frac12\psi'_0\oa0\in\bar Q$.

\item[Type $D^{(1)}_\ell$]
Since $\psi$ is of level $0$, we get
\[\psi_0+\psi_1+2(\psi_2+\dots+\psi_{\ell-2})+\psi_{\ell-1}+\psi_\ell = 0.\]
The Cartan matrix in type $D^{(1)}_\ell$ is given by:
\[
\brac{\begin{array}{c@{\hspace{\lc}}c@{\hspace{\lc}}c@{\hspace{\lc}}c@{\hspace{\lc}}c@{\hspace{\lc}}c@{\hspace{\lc}}c@{\hspace{\lc}}c}
2&0&\mo&0&\pb&\cdots&0&0\\[\lo]
0&2&\mo&0&\pb&\cdots&0&0\\[\lo]
\mo&\mo&2&\mo&0&\pb&\vdots&\vdots\\[\lo]
0&0&\mo&2&\ddts&\ddts&\pb&\pb\\[\lo]
\pb&\pb&0&\ddts&\ddts&\mo&0&0\\[\lo]
\vdots&\vdots&\pb&\ddts&\mo&2&\mo&\mo\\[\lo]
0&0&\cdots&\pb&0&\mo&2&0\\[\lo]
0&0&\cdots&\pb&0&\mo&0&2
\end{array}}.
\]
We observe that in each column the sum of the first two entries is even, so if $\psi\in\bar Q$ then certainly $\psi_0+\psi_1$ is even.  Furthermore, if $\ell$ is even, then the sum of the odd-numbered entries in each column is even, so $\psi$ satisfies the required congruences in this case.  A similar check guarantees the required congruence in the case where $\ell$ is odd.

Conversely, suppose $\psi$ does satisfy the required congruences.  Then in particular $\psi_0$ and $\psi_1$ have the same parity. By subtracting $\oa2$ from $\psi$ if necessary, we may assume that $\psi_0,\psi_1$ are even.  Now we can eliminate these two coordinates by subtracting appropriate multiples of $\oa0,\oa1$ from $\psi$.  Having done this, we eliminate the $i$th coordinate of $\psi$ for $i=2,3,\dots,\ell-2$ in turn by subtracting an appropriate multiple of $\oa{i+1}$.  This leaves us with a hub $\psi'$ having $\psi'_i=0$ for $i<\ell-1$.  $\psi'$ is of level $0$, so $\psi'_{\ell-1}+\psi'_\ell=0$; furthermore, $\psi'$ satisfies the same congruence as $\psi$, which implies (whether $\ell$ is even or odd) that $\psi'_{\ell-1}$ is even.  But now we have $\psi'=\frac12\psi'_{\ell-1}(\oa{\ell-1}-\oa\ell)\in\bar Q$.

\item[Type $D^{(2)}_{\ell+1}$]
Suppose $\fkg$ is of type $D^{(2)}_{\ell+1}$.   Since $\psi$ is of level $0$, it satisfies
\[\psi_0+2(\psi_1+\dots+\psi_{\ell-1})+\psi_\ell=0.\]
We are trying to prove that $\psi \in \bar Q$ if and only if $\psi_0\equiv 0\pmod2$.  The Cartan matrix is
\[
\brac{\begin{array}{c@{\hspace{\lc}}c@{\hspace{\lc}}c@{\hspace{\lc}}c@{\hspace{\lc}}c@{\hspace{\lc}}c@{\hspace{\lc}}c}
2&\mt&0&\pb&\cdots&0\\[\lo]
\mo&2&\mo&0&\pb&\vdots\\[\lo]
0&\mo&2&\mo&\ddts&\pb\\[\lo]
\pb&0&\ddts&\ddts&\ddts&0\\[\lo]
\vdots&\pb&\ddts&\mo&2&\mo\\[\lo]
0&\cdots&\pb&0&\mt&2
\end{array}}.
\]

Since the top row of this matrix consists of even numbers, we must have $\psi_0\equiv0\ppmod2$ if $\psi \in \bar Q$.

Conversely, suppose $\psi_0$ is even.  By subtracting an appropriate multiple of $\oa0$ from $\psi$ if necessary, we can assume that $\psi_0=0$.  Then we can eliminate the $i$th coordinate of $\psi$ for $i=1,\dots,\ell-1$ in turn by subtracting an appropriate multiple of $\oa{i+1}$.  We are left with a hub all of whose coordinates except possibly the last are zero.  Since this hub has level $0$, the last coordinate must be zero too.

\item[Type $A^{(2)}_{2\ell-1}$]
 Suppose $\fkg$ is of type $A^{(2)}_{2\ell-1}$.  Since $\psi$ is of level $0$, we get
\[\psi_0+\psi_1+2(\psi_2+\dots+\psi_\ell)=0.\]
Each $\oa i$ is determined by the appropriate column of the  Cartan matrix
\[
\brac{\begin{array}{c@{\hspace{\lc}}c@{\hspace{\lc}}c@{\hspace{\lc}}c@{\hspace{\lc}}c@{\hspace{\lc}}c@{\hspace{\lc}}c}
2&0&\mo&0&\pb&\cdots&0\\[\lo]
0&2&\mo&0&\pb&\cdots&0\\[\lo]
\mo&\mo&2&\mo&0&\pb&\vdots\\[\lo]
0&0&\mo&2&\ddts&\ddts&\pb\\[\lo]
\pb&\pb&0&\ddts&\ddts&\mo&0\\[\lo]
\vdots&\vdots&\pb&\ddts&\mo&2&\mt\\[\lo]
0&0&\cdots&\pb&0&\mo&2
\end{array}}.
\]
In any column the sum of the odd-numbered entries is even (regardless of whether $\ell$ is even), so the required condition holds if $\psi \in \bar Q$.

Conversely, suppose that the condition holds.  Since $\psi$ has level $0$, we must have $\psi_0+\psi_1$ even; by subtracting $\oa2$ if necessary, we may assume $\psi_0$ and $\psi_1$ are both even.  Then we can eliminate these coordinates by subtracting appropriate multiples of $\oa0,\oa1$.  Then we eliminate the $i$th coordinate for $i=2,\dots,\ell-2$ by subtracting an appropriate multiple of $\oa{i-1}$.  We are left with a hub $\psi'$ whose $i$th coordinate is zero for $i<\ell-1$.  Since $\psi'$ has level $0$, we must have $\psi'_{\ell-1}+\psi'_\ell=0$.  Moreover, the sum of the odd-numbered coordinates of $\psi'$ is even, so $\psi'_{\ell-1}$ and $\psi'_\ell$ are even.  But now $\psi'=\frac12\psi'_\ell\oa\ell\in\bar Q$.

\item[Type $E^{(1)}_{6}$]   Since $\psi$ is of level $0$, it satisfies
\[\psi_0+2\psi_6+\psi_1+2\psi_2+3\psi_3+2\psi_4+\psi_5=0.\]
The Cartan matrix is as follows.

\setlength\lc{3.5pt}\setlength\lo{-0.5pt}
\[
\brac{\begin{array}{c@{\hspace{\lc}}c@{\hspace{\lc}}c@{\hspace{\lc}}c@{\hspace{\lc}}c@{\hspace{\lc}}c@{\hspace{\lc}}c}
2&0&0&0&0&0&\mo\\[\lo]
0&2&\mo&0&0&0&0\\[\lo]
0&\mo&2&\mo&0&0&0\\[\lo]
0&0&\mo&2&\mo&0&\mo\\[\lo]
0&0&0&\mo&2&\mo&0\\[\lo]
0&0&0&0&\mo&2&0\\[\lo]
\mo&0&0&\mo&0&0&2
\end{array}}.
\]
We are trying to prove that $\psi \in \bar Q$ if and only if
\[\psi_0+2\psi_6\equiv \psi_5+2\psi_4\pmod 3.\]
We first note that this condition does indeed hold for all the $\oa{i}$, and thus for every element of $\bar Q$.  In order to prove the opposite direction of the implication, we first replace $\psi$ with a $\bar Q$-equivalent hub $\psi'$ in which the $0$th, $1$st and $5$th coordinates have been eliminated:
\[\psi'=\psi+\psi_0\oa6+\psi_1 \oa2+\psi_5 \oa4.\]
The three elements of $\bar Q$ given by $\oa0+2\oa6$,$\oa1+2\oa2$, and $\oa5+2\oa4$ each have a $\mt$ in the third row, a $3$ in one of the rows $2$, $4$ or $6$, and $0$s in all other rows.  By adding suitable multiples of these elements to $\psi'$, we can create a new hub $\psi''$ in which $\psi''_0=\psi''_1=\psi''_5=0$ and $\psi''_2,\psi''_4,\psi''_6\in\{0,1,2\}$.  $\psi''$ satisfies the same congruence as $\psi$, which then means that $\psi''_4=\psi''_6$.

$\psi''$ also has level $0$, which tells us that
\[
2(\psi_2''+\psi_4''+\psi_6'')=-3\psi_3'';
\]
in particular, $\psi''_2+\psi''_4+\psi''_6$ is divisible by $3$.  Since $\psi''_4=\psi''_6$, this means that $\psi''_2$, $\psi''_4$, $\psi_6''$ are all equal, and thus $\psi''$ is a multiple of $\oa3$ and so is in $\bar Q$.

\item[Type $E^{(1)}_{7}$]   Since $\psi$ is of level $0$, it satisfies
\[\psi_0+2\psi_1+3\psi_2+4\psi_3+3\psi_4+2\psi_5+\psi_6+2\psi_7=0,\]
The Cartan matrix is
\[
\brac{\begin{array}{c@{\hspace{\lc}}c@{\hspace{\lc}}c@{\hspace{\lc}}c@{\hspace{\lc}}c@{\hspace{\lc}}c@{\hspace{\lc}}c@{\hspace{\lc}}c}
2&\mo&0&0&0&0&0&0\\[\lo]
\mo&2&\mo&0&0&0&0&0\\[\lo]
0&\mo&2&\mo&0&0&0&0\\[\lo]
0&0&\mo&2&\mo&0&0&\mo\\[\lo]
0&0&0&\mo&2&\mo&0&0\\[\lo]
0&0&0&0&\mo&2&\mo&0\\[\lo]
0&0&0&0&0&\mo&2&0\\[\lo]
0&0&0&\mo&0&0&0&2
\end{array}}.
\]
We are trying to prove that $\psi \in \bar Q$ if and only if $\psi_0+\psi_2+\psi_7$ is even.  Summing rows $0$, $2$ and $7$ shows that every element of $\bar Q$ does indeed satisfy this congruence.

We now assume that $\psi$ satisfies the congruence and show that it lies in $\bar Q$. We being by eliminating the first two coordinates of $\psi$ by adding appropriate multiples of $\oa1$ and $\oa2$.  The resulting hub $\psi'$ also satisfies the given congruence, so that $\psi'_2$ and $\psi'_7$ have the same parity.  By adding an appropriate multiple of $\oa7$, we can assume that $\psi'_2=\psi'_7$.  Now we can eliminate both of these coordinates by adding a multiple of $\oa3$.  Then we can eliminate the $3$rd, $4$th and $5$th coordinates in succession by adding appropriate multiples of $\oa4$, $\oa5$ and $\oa6$.  We are left with a hub $\psi''$ in which all coordinates except possibly $\psi''_6$ are zero.  Since $\psi''$ has level zero, $\psi''_6$ is zero too, and so $\psi\in\bar Q$.

\item[Type $E^{(1)}_{8}$]   Since $\psi$ is of level $0$, it satisfies
\[\psi_0+2\psi_1+3\psi_2+4\psi_3+5\psi_4+6\psi_5+4\psi_6+2\psi_7+3\psi_8=0.\]
In this case there is no additional congruence. The Cartan matrix is
\[
\brac{\begin{array}{c@{\hspace{\lc}}c@{\hspace{\lc}}c@{\hspace{\lc}}c@{\hspace{\lc}}c@{\hspace{\lc}}c@{\hspace{\lc}}c@{\hspace{\lc}}c@{\hspace{\lc}}c}
2&\mo&0&0&0&0&0&0&0\\[\lo]
\mo&2&\mo&0&0&0&0&0&0\\[\lo]
0&\mo&2&\mo&0&0&0&0&0\\[\lo]
0&0&\mo&2&\mo&0&0&0&0\\[\lo]
0&0&0&\mo&2&\mo&0&0&0\\[\lo]
0&0&0&0&\mo&2&\mo&0&\mo\\[\lo]
0&0&0&0&0&\mo&2&\mo&0\\[\lo]
0&0&0&0&0&0&\mo&2&0\\[\lo]
0&0&0&0&0&\mo&0&0&2
\end{array}}.
\]
In this case we will eliminate all the coordinates starting at the end.  We will get a series of intermediate hubs, which will be denoted by $\psi=\psi^0, \psi^1,\dots,\psi^7$, where for each $\psi^i$, only $\psi^i_0,\dots,\psi^i_{8-i}$ can be non-zero.  The modifications by elements of $\bar Q$ are as follows:
\begin{align*}
\psi^1&=\psi^0+\psi^0_7 \oa{6},\\
\psi^2&=\psi^1+\psi^1_6\oa{5},\\
\psi^3&=\psi^2-\psi^2_8(3\oa{5}+2\oa{6}+\oa{7}+2\oa{8}),\\
\psi^4&=\psi^3+\psi^3_5 \oa{4},\\
\psi^5&=\psi^4+\psi^4_4 \oa{3},\\
\psi^6&=\psi^5+\psi^5_3 \oa{2},\\
\psi^7&=\psi^6+\psi^6_2 \oa{1}.
\end{align*}
The level 0 condition then implies that $\psi_8$ is a multiple of $\oa0 \in \bar Q$, from which we conclude that $\psi \in \bar Q$, as desired.

\item[Types $A^{(2)}_{2\ell}$, $E^{(2)}_6$ and $D^{(3)}_4$]
In these cases, there is no additional condition in the table, so we must show that the level $0$ condition alone guarantees that $\psi\in\bar Q$. In each of these cases, we examine the Cartan matrix and find that for each $i=1,\dots,\ell$ we have $(\oa{i-1})_i=-1$ while $(\oa{i-1})_j=0$ for $j>i$; in terms of the Dynkin diagram, this just says that the diagram is a path, with all the arrows pointing to the left.  Hence given $\psi$ of level $0$, we may eliminate the $i$th coordinate of $\psi$ for $i=\ell,\ell-1,\dots,1$ in turn by subtracting an appropriate multiple of $\oa{i-1}$.  We are left with a hub $\psi'$ which satisfies $\psi'_i=0$ for $i>0$.  Since $\psi'$ is also of level $0$, we get $\psi'=0$ too, because the level $0$ condition always involves $\psi_0$ with non-zero coefficient.
\item[Types $F^{(1)}_4$ and $G^{(1)}_2$]
These cases are dealt with in the same way as the preceding cases, except that we reverse the ordering of the nodes: we now have $(\oa i)_{i-1}=-1$ for $i=1,\dots,\ell$, while $(\oa i)_j=0$ for $j<i-1$.  So given a hub $\psi$ of level $0$, we may eliminate the $i$th coordinate of $\psi$ for $i=0,\dots,\ell-1$ in turn by subtracting an appropriate multiple of $\oa{i+1}$.  We are left with a hub $\psi'$ of level $0$ which satisfies $\psi'_i=0$ for $i<\ell$, and which must therefore be zero.
\qedhere\end{description}
\end{proof}

\section{The defect of a weight in $P(\Lm)$}

If $\eta=\Lm-\alpha$ is a weight and $s$ is an integer, we recall that
\[
\defe(\eta)=(\eta|\alpha)-\tfrac12(\alpha|\alpha).
\]
then
\[
\defe(\eta+s\delta)=\defe(\eta)+s(\Lambda|\delta)=\defe(\eta)+sk,
\]
since
\[
(\Lm-\alpha-\delta|\Lm-\alpha-\delta)=(\Lm-\alpha|\Lm-\alpha)-2(\Lm-\alpha|\delta)+(\delta|\delta)
\]
and
\[(\delta|\delta)=(\alpha|\delta)=(\delta|\alpha)=0, \quad(\Lambda|\delta)=k.\]

From these results we see that the defect of a weight $\eta$ of level $k$ is at least $k$ times its $\delta$-shift $s$.
Since by \cite[Lemma 12.6]{Ka}, every maximal weight in level $1$ is in $W\cdot\Lm$ and thus of defect $0$, this shows that for level $1$, the defect is equal to the $\delta$-shift $s$.

There is a unique dominant weight of defect zero, namely $\Lm$.  Indeed, the defect of $\eta$ is zero only if $(\Lm|\Lm)=(\eta|\eta)$, and by \cite[Prop.\ 11.4]{Ka} this implies that $\eta$ lies in the $W$-orbit of $\Lm$.   Since any $W$-orbit contains a unique dominant weight, we see that if $\eta$ has defect zero and a positive hub, then $\eta =\Lm$.

\begin{remark} In the level one case, as just described, one can determine the maximal weights because they all have defect zero.
For higher levels, the defect is not sufficient to determine whether or not a weight is a maximal weight.  Although any weight of defect less that $k$ must be a maximal weight, there may be weights of defect greater than $k$ which are maximal weights.  If, for example, $\mathfrak g = \fkg(A^{(1)}_\ell)$, $\ell\geq2$ and $\Lm = 2m\Lm_0$, then there is a weight $\eta=\Lm - m\alpha_0$ with positive hub $(0,m,0,\dots,0,m)$ and defect
$$\defe(\eta)=(2m\Lm_0|m\alpha_0)-\tfrac{1}{2}(m\alpha_0|m\alpha_0)=2m^2-m^2=m^2.$$ For $m>2$, this is larger than $k=2m$.
\end{remark}

\section{Finding the weights in $P(\Lm)$ -- an example}\label{s:examp}

\begin{figure}[b]
\[
\begin{tikzpicture}[scale=1.3]
\draw(0,0)node{\begin{tabular}{c}$(0,3,3)$\\\small$[7,-1,-2]$\end{tabular}};
\draw(2,1)node{\begin{tabular}{c}$(0,2,3)$\\\small$[6,1,-3]$\end{tabular}};
\draw(-4,4)node{\begin{tabular}{c}$(0,2,0)$\\\small$[3,-2,3]$\end{tabular}};
\draw(0,6)node{\begin{tabular}{c}$(0,0,0)$\\\small$[1,2,1]$\end{tabular}};
\draw(2,5)node{\begin{tabular}{c}$(0,0,1)$\\\small$[2,3,-1]$\end{tabular}};
\draw(-4,2)node{\begin{tabular}{c}$(0,3,1)$\\\small$[5,-3,2]$\end{tabular}};
\foreach \y in {0.3}
{\draw[<->](2*\y,\y)--(2-2*\y,1-\y);
\draw[<->](-4+2*\y,4-\y)--(2-2*\y,1+\y);
\draw[<->](-4+2*\y,4+\y)--(-2*\y,6-\y);
\draw[<->](2*\y,6-\y)--(2-2*\y,5+\y);
\draw[<->](2-2*\y,5-\y)--(-4+2*\y,2+\y);
\draw[<->](-4+2*\y,2-\y)--(-2*\y,\y);
}
\draw(1,.5)node[fill=white,inner sep=1pt]{\small$1$};
\draw(-1,3.5)node[fill=white,inner sep=1pt]{\small$1$};
\draw(-2,5)node[fill=white,inner sep=1pt]{\small$1$};
\draw(-2,1)node[fill=white,inner sep=1pt]{\small$2$};
\draw(-1,2.5)node[fill=white,inner sep=1pt]{\small$2$};
\draw(1,5.5)node[fill=white,inner sep=1pt]{\small$2$};
\end{tikzpicture}
\]
\begin{center}\caption{}\label{fig2}\end{center}
\end{figure}
In this section we give an example in which we calculate the maximal dominant weights in $P(\Lm)$ use them to test whether a weight $\eta$ lies in $P(\Lm)$.

Recall that the Weyl group $W$ may be regarded as a group of isometries of the weight space of $\fkg$; the generating reflections $s_0,\dots,s_\ell$ act via
\[
s_i:\eta\longmapsto \eta-\la h_i,\eta\ra\alpha_i.
\]

The results in this paper give a way to determine whether a given integral weight $\eta$ lies in the weight space $P(\Lm)$.  To do this, we begin by finding the set $N$ of all maximal dominant weights in $P(\Lm)$, and then computing $\tilde N=\ow \cdot N$.

In this section we study the example where $\mathfrak g =\widehat{\mathfrak{sl}}_3$ (that is, of type $A^{(1)}_2$) and $\Lm=\Lm_0+2\Lm_1+\Lm_2$ (so $\ell=2$ and $k=4$).  We represent a weight $\Lm-\gamma_0\alpha_0-\gamma_1\alpha_1-\gamma_2\alpha_2$ by its content $(\gamma_0,\gamma_1,\gamma_2)$, and we write the hub as $[\theta_0,\theta_1,\theta_2]$.

To begin with, we find the positive hubs, using Proposition \ref{smallpath}.  The hub of $\Lm$ is $[1,2,1]$, of level $4$.  So the hubs of weights in $P(\Lm)$ are those $\theta$ for which $\theta_0+\theta_1+\theta_2=4$ and $\theta_1+2\theta_2\equiv1\ppmod3$.  It is easy to check that the only positive hubs satisfying these conditions are
\[
[1,2,1],\quad[2,0,2],\quad[3,1,0],\quad[0,4,0],\quad[0,1,3].
\]
Using Proposition \ref{maxcont}, we can find the corresponding maximal weights, which we label $a^0,a^1,a^2,A^1,A^2$, respectively.  The contents of these weights are as follows:
\begin{alignat*}2
\gamma(a^0)&=(0,0,0),&\quad\gamma(a^1)&=(0,1,0),\quad\gamma(a^2)=(0,1,1),\\
\qquad\gamma(A^1)&=(1,0,1),&\quad\gamma(A^2)&=(1,1,0).
\end{alignat*}
The superscript in the notation for each weight indicates the defect.

Next we compute $\tilde N$ by applying the reflections $r_{\alpha_1},r_{\alpha_2}$ to these five weights.  Applying $r_{\alpha_i}$ to a weight means adding $\theta_i$ copies of $-\alpha_i$, where $\theta_i$ is the $i$th component of the hub.  For example, for the weight $a^0$, we get the picture in Figure \ref{fig2} (where we write both the content and hub of each weight, and arrows labelled $i$ represent the reflections $s_i=r_{\alpha_i}$).

It turns out that $\tilde N$ contains 21 weights, comprising four $\ow$-orbits.  We describe these, together with their images under the reflections $r_{\alpha_1},r_{\alpha_2}$, in the following table.
\begin{longtable}{|c|c|c|ccc|l|}\hline
$\eta$&$\text{content}$&$\text{hub}$&$r_{\alpha_1}(\eta)$&$r_{\alpha_2}(\eta)$&$\ \ \ \tilde \eta$\\\hline\endhead
$a^0$&$(0,0,0)$&$[1,2,1]$&$b^0$&$d^0$&$(0,0)$\\
$b^0$&$(0,2,0)$&$[3,-2,3]$&$a^0$&$c^0$&$(2,0)$\\
$c^0$&$(0,2,3)$&$[6,1,-3]$&$f^0$&$b^0$&$(0,3)$\\
$d^0$&$(0,0,1)$&$[2,3,-1]$&$e^0$&$a^0$&$(0,1)$\\
$e^0$&$(0,3,1)$&$[5,-3,2]$&$d^0$&$f^0$&$(3,1)$\\
$f^0$&$(0,3,3)$&$[7,{-}1,-2]$&$c^0$&$e^0$&$(3,3)$\\\hline
$a^1$&$(0,1,0)$&$[2,0,2]$&$a^1$&$b^1$&$(1,0)$\\
$b^1$&$(0,1,2)$&$[4,2,-2]$&$c^1$&$a^1$&$(1,2)$\\
$c^1$&$(0,3,2)$&$[6,-2,0]$&$b^1$&$c^1$&$(3,2)$\\\hline
$a^2$&$(0,1,1)$&$[3,1,0]$&$b^2$&$a^2$&$(1,1)$\\
$b^2$&$(0,2,1)$&$[4,-1,1]$&$a^2$&$c^2$&$(2,1)$\\
$c^2$&$(0,2,2)$&$[5,0,-1]$&$c^2$&$b^2$&$(2,2)$\\\hline
$A^1$&$(1,0,1)$&$[0,4,0]$&$B^1$&$A^1$&$(3,0)^\ast$\\
$B^1$&$(1,4,1)$&$[4,-4,4]$&$A^1$&$C^1$&$(3,0)$\\
$C^1$&$(1,4,5)$&$[8,0,-4]$&$C^1$&$B^1$&$(3,0)$\\\hline
$A^2$&$(1,1,0)$&$[0,1,3]$&$B^2$&$D^2$&$(0,3)^\ast$\\
$B^2$&$(1,2,0)$&$[1,-1,4]$&$A^2$&$C^2$&$(1,3)^\ast$\\
$C^2$&$(1,2,4)$&$[5,3,-4]$&$F^2$&$B^2$&$(1,3)$\\
$D^2$&$(1,1,3)$&$[3,4,-3]$&$E^2$&$A^2$&$(0,2)^\ast$\\
$E^2$&$(1,5,3)$&$[7,-4,1]$&$D^2$&$F^2$&$(0,2)$\\
$F^2$&$(1,5,4)$&$[8,-3,-1]$&$C^2$&$E^2$&$(0,3)$\\\hline
\end{longtable}
In the cases where the $T$-class has more than one element, we have chosen the first occurrence in the table as representative and marked it with an asterisk.  Using this table, one can then test a given weight $\eta\in\Lm-Q$ to see whether it lies in $P(\Lm)$, by calculating $\tilde \eta$.  For example, if $\eta = (2,7,3)$, we compute
\[
\tilde\eta=((7-2)\,\amod 4,(3-2)\,\amod 4)=(1,3)=\tilde a^2.
\]
Letting $\zeta=a^2=\Lm-\alpha_1-\alpha_2$, we have
$$\eta-\zeta=-2\alpha_0-6\alpha_1-2\alpha_2.$$
Hence $\eta=t_\alpha(\zeta)$, where $\alpha={-}\alpha_1$.  To determine whether or not $\xi$ is in $P(\Lm)$, we now calculate (from Theorem \ref{main})
\[
s(\eta)=2-\left((\zeta|{-}\alpha_1)-\tfrac12({-}\alpha_1|{-}\alpha_1)4\right).
\]
Since $(\zeta|{-}\alpha_1)=-2+2-1=-1$, and $({-}\alpha_1|{-}\alpha_1)=2$, we get $s(\eta)=-1<0$, showing that $\eta$ is not in $P(\Lm)$ by Lemma \ref{moddelta}.

\begin{figure}[phtb]\label{crystal}
\begin{center}
\setlength\ofs{0.5mm}
\setlength\rd{1.0cm}
\setlength\ld{1.0cm}
\setlength\dd{1.0cm}
\setlength\ri{2.0cm}
\setlength\lf{1.1cm}

\[
\begin{tikzpicture}[scale=1.146]
\draw[bluex](-\lf,-\ld-2\dd)--++(-2\lf,-2\ld)--++(6\ri,-6\rd)--++(-4\lf,-4\ld)--++(-2\ri,2\rd)--++(6\lf,6\ld)--cycle;
\draw[bluex](\ri,-\rd-2\dd)--++(\ri,-\rd)--++(-6\lf,-6\ld)--++(5\ri,-5\rd)--++(\lf,\ld)--++(-6\ri,6\rd)--cycle;
\draw[bluex](-2\lf,-2\ld-2\dd)--++(5\ri,-5\rd)--++(-5\lf,-5\ld);
\draw[bluex](-4\lf+\ri,-2\dd-4\ld-\rd)--++(5\ri,-5\rd);
\draw[bluex](3\ri-\lf,-2\dd-3\rd-\ld)--++(-5\lf,-5\ld);
\draw[greenx](0,-\dd)--++(-3\lf,-3\ld)--++(5\ri,-5\rd)--++(-2\lf,-2\ld)--++(-3\ri,3\rd)--++(5\lf,5\ld)--cycle;
\draw[greenx](-\lf,-\dd-\ld)--++(3\ri,-3\rd)--++(-4\lf,-4\ld);
\draw[greenx](-2\lf,-\dd-2\ld)--++(4\ri,-4\rd)--++(-3\lf,-3\ld);
\draw[greenx](\ri,-\rd-\dd)--++(-4\lf,-4\ld)--++(4\ri,-4\rd);
\foreach \x in {0,1,2}
\draw[greyx](-3\lf-\x\lf+\x\ri,-3\ld-\x\ld-\x\rd-\dd)--++(0,-\dd);
\foreach \x in {3,4,5}
\draw[greyx](-5\lf+\x\ri,-5\ld-\x\rd-\dd)--++(0,-\dd);
\draw[greyx](-4\lf+5\ri,-4\ld-5\rd-\dd)--++(0,-\dd);
\foreach \x in {0,1,2,3}
\draw[greyx](-\x\lf+2\ri+\x\ri,-\x\ld-2\rd-\x\rd-\dd)--++(0,-\dd);
\draw[redx](0,0)--++(\ri,-\rd)--++(-3\lf,-3\ld)--++(2\ri,-2\rd)--++(\lf,\ld)--++(-3\ri,3\rd)--cycle;
\draw[redx](-\lf,-\ld)--++(2\ri,-2\rd)--++(-2\lf,-2\ld);
\draw[greyx](-2\lf,-2\ld)--++(0,-\dd);
\foreach \x in {1,2,3}
\draw[greyx](-3\lf+\x\ri,-3\ld-\x\rd)--++(0,-\dd);
\foreach \x in {0,1,2}
\draw[greyx](\ri+\x\ri-\x\lf,-\rd-\x\rd-\x\ld)--++(0,-\dd);
\draw(-\lf,-\ld-2\dd)node[bluex,fill=white,inner sep=0pt]{\nod2100};
\draw(-2\lf,-2\ld-2\dd)node[bluex,fill=white,inner sep=0pt]{\nod2200};
\draw(-3\lf,-3\ld-2\dd)node[bluex,fill=white,inner sep=0pt]{\nod2300};
\draw(-5\lf+\ri,-5\ld-\rd-2\dd)node[bluex,fill=white,inner sep=0pt]{\nod2510};
\draw(-6\lf+2\ri,-6\ld-2\rd-2\dd)node[bluex,fill=white,inner sep=0pt]{\nod2620};
\draw(-7\lf+4\ri,-7\ld-4\rd-2\dd)node[bluex,fill=white,inner sep=0pt]{\nod2740};
\draw(-7\lf+5\ri,-7\ld-5\rd-2\dd)node[bluex,fill=white,inner sep=0pt]{\nod2750};
\draw(-7\lf+6\ri,-7\ld-6\rd-2\dd)node[bluex,fill=white,inner sep=0pt]{\nod2760};
\draw(-6\lf+7\ri,-6\ld-7\rd-2\dd)node[bluex,fill=white,inner sep=0pt]{\nod2670};
\draw(-5\lf+7\ri,-5\ld-7\rd-2\dd)node[bluex,fill=white,inner sep=0pt]{\nod2570};
\draw(-3\lf+6\ri,-3\ld-6\rd-2\dd)node[bluex,fill=white,inner sep=0pt]{\nod2360};
\draw(\ofs-2\lf+5\ri,-2\ld-5\rd-2\dd)node[bluex,fill=white,inner sep=0pt]{\nod2250};
\draw(\ofs-\lf+4\ri,-\ld-4\rd-2\dd)node[bluex,fill=white,inner sep=0pt]{\nod2140};
\draw(2\ri,-2\rd-2\dd)node[bluex,fill=white,inner sep=0pt]{\nod2020};
\draw(-2\ofs+\ri,-\rd-2\dd)node[bluex,fill=white,inner sep=0pt]{\nod2010};
\draw(\ofs,-\dd)node[greenx,fill=white,inner sep=0pt]{\nod1000};
\draw(\ofs-\lf,-\ld-\dd)node[greenx,fill=white,inner sep=0pt]{\nodhub1102013};
\draw(-2\ofs-2\lf,-2\ld-\dd)node[greenx,fill=white,inner sep=0pt]{\nod1202};
\draw(-2\ofs-3\lf,-3\ld-\dd)node[greenx,fill=white,inner sep=0pt]{\nod1300};
\draw(-4\lf+\ri,-4\ld-\rd-\dd)node[greenx,fill=white,inner sep=0pt]{\nod1411};
\draw(-5\lf+2\ri,-5\ld-2\rd-\dd)node[greenx,fill=white,inner sep=0pt]{\nod1520};
\draw(-5\lf+3\ri,-5\ld-3\rd-\dd)node[greenx,fill=white,inner sep=0pt]{\nod1532};
\draw(-5\lf+4\ri,-5\ld-4\rd-\dd)node[greenx,fill=white,inner sep=0pt]{\nod1542};
\draw(-5\lf+5\ri,-5\ld-5\rd-\dd)node[greenx,fill=white,inner sep=0pt]{\nod1550};
\draw(-4\lf+5\ri,-4\ld-5\rd-\dd)node[greenx,fill=white,inner sep=0pt]{\nod1451};
\draw(-\ofs-3\lf+5\ri,-3\ld-5\rd-\dd)node[greenx,fill=white,inner sep=0pt]{\nod1350};
\draw(-\ofs-2\lf+4\ri,-2\ld-4\rd-\dd)node[greenx,fill=white,inner sep=0pt]{\nod1243};
\draw(\ofs-\lf+3\ri,-\ld-3\rd-\dd)node[greenx,fill=white,inner sep=0pt]{\nod1130};
\draw(2\ofs+2\ri,-2\rd-\dd)node[greenx,fill=white,inner sep=0pt]{\nod1020};
\draw(\ofs+\ri,-\rd-\dd)node[greenx,fill=white,inner sep=0pt]{\nodhub1011040};
\draw(0,0)node[redx,fill=white,inner sep=0pt]{\nodhub0000121};
\draw(-\ofs-\lf,-\ld)node[redx,fill=white,inner sep=0pt]{\nodhub0101202};
\draw(-\ofs-2\lf,-2\ld)node[redx,fill=white,inner sep=0pt]{\nod0200};
\draw(\ri,-\rd)node[redx,fill=white,inner sep=0pt]{\nod0010};
\draw(-\ofs+\ri-\lf,-\rd-\ld)node[redx,fill=white,inner sep=0pt]{\nodhub0112310};
\draw(2\ofs+\ri-2\lf,-\rd-2\ld)node[redx,fill=white,inner sep=0pt]{\nod0213};
\draw(2\ofs+\ri-3\lf,-\rd-3\ld)node[redx,fill=white,inner sep=0pt]{\nod0310};
\draw(2\ri-\lf,-2\rd-\ld)node[redx,fill=white,inner sep=0pt]{\nod0120};
\draw(2\ri-2\lf,-2\rd-2\ld)node[redx,fill=white,inner sep=0pt]{\nod0222};
\draw(2\ri-3\lf,-2\rd-3\ld)node[redx,fill=white,inner sep=0pt]{\nod0321};
\draw(-\ofs+3\ri-2\lf,-3\rd-2\ld)node[redx,fill=white,inner sep=0pt]{\nod0230};
\draw(3\ri-3\lf,-3\rd-3\ld)node[redx,fill=white,inner sep=0pt]{\nod0330};
\end{tikzpicture}
\]

\caption{}
\end{center}
\end{figure}

In Figure \ref{crystal} we have given a three-dimensional representation of $P(\Lm)$; the diagonal lines indicate subtraction of $\alpha_1$ and $\alpha_2$, while the vertical lines indicate subtraction of $\alpha_0$; for clarity, we have only drawn the vertical lines  which would be visible in an opaque three-dimensional model.  We have recorded the contents of the maximal weights, with an exponent giving the defect of each maximal weight.  The contents and defects of the other weights can be deduced by shifting by $\delta$.   Since all the positive  hubs have defect less than $k$, one can get the $\delta$-shifts by subtracting the residue of the defect modulo $e$ and dividing by $k$.   For the five maximal weights with positive  hubs we have also indicated the hubs in square brackets.  The reflection $r_{\alpha_1}$ inverts strings going diagonally down to the left, while the reflection $r_{\alpha_2}$ inverts strings going diagonally down to the right.  All the weights along a given horizontal line in the two-dimensional representation have the same height, so in the corresponding cyclotomic Hecke algebra they correspond to the blocks of $H^\Lm_d$ for a fixed rank $d$.

We define the $j$th \textit{floor} of $P(\Lm)$ to be the set of all weights with $0$-content $j$, then each floor is a union of $\ow$-orbits.  The reflection $r_{\alpha_i}$ reflects all $i$-strings for $i=1,\dots,\ell$.

Each floor contains a $\delta$-shifted copy of the previous floor and whatever new maximal weights appear on that floor. The weights on the $0$th floor are all maximal weights, and the remaining maximal weights appear on $0$-strings or at the ends of strings on other floors. We shall see in Lemma \ref{internal} below that there cannot be more than $\max{a_i}$ maximal weights at either end of a string, unless all the weights in the string are maximal.

\section{$\delta$-shifts}

In this section we examine further the $\delta$-shift of a weight in $P(\Lm)$; in particular, we examine how $\delta$-shift varies along a string in $P(\Lm)$.

In Lemma \ref{moddelta} we showed that if $\Lm$ is a dominant integral weight, and $\eta$ is any other equivalent weight, i.e.\ $\eta \in \Lm - Q$, then there exists an integer $s$ such that $\eta +s\delta \in P(\Lm)$.
 The $\delta$-shift  $s(\eta)$ of $\eta\in P(\Lm)$ was defined as the largest $s$ such that $\eta+s\delta\in P(\Lm)$.  So $s(\eta)=0$ if and only if $\eta$ is maximal.

Recall that $\Delta^{\re}$ denotes the set of \emph{real roots}, i.e.\ the images of the simple roots under the action of the Weyl group.  For $\alpha\in\Delta^{\re}$, an \emph{$\alpha$-string} is a set of weights
\[\lm,\lm+\alpha,\lm+2\alpha,\dots,\lm+t\alpha\]
all lying in $P(\Lm)$, with $\lm-\alpha,\lm+(t+1)\alpha\notin P(\Lm)$.  If $\alpha$ is the simple root $\alpha_i$, then we call an $\alpha$-string an \emph{$i$-string}.

\begin{prop}\label{internal}
Suppose $\alpha\in\Delta^{\re}$ and $\eta\in P(\Lm)$. Let $a=\max\{a_i\mid i=0,\dots,\ell\}$. Suppose that $s(\eta)=0$, and that $\eta+a\alpha,\eta-a\alpha$ both lie in $P(\Lm)$. Then every weight in the $\alpha$-string containing $\eta$ has $\delta$-shift $0$.
\end{prop}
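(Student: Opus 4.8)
The plan is to study the single $\alpha$-line $\{\eta+j\alpha\mid j\in\bbz\}$ through $\eta$ and show that the $\delta$-shift is constant (and zero) along it. Since $\alpha$ is a real root, the subalgebra $\mathfrak{sl}_2^\alpha=\la e_\alpha,f_\alpha,\alpha^\vee\ra$ acts integrably on $L(\Lm)$, so on each parallel line $\{\eta+j\alpha+r\delta\}$ the weights lying in $P(\Lm)$ form a single unbroken string, symmetric under $r_\alpha$. Because $r_\alpha$ fixes $\delta$, all of these strings are symmetric about the same centre $\xi$, namely the unique point of the line with $\la\xi,\alpha^\vee\ra=0$. Writing $S_r=\{j\mid \eta+j\alpha+r\delta\in P(\Lm)\}$, the identity $s(\mu+r\delta)=s(\mu)-r$ shows that $S_0\supseteq S_1\supseteq S_2\supseteq\cdots$ is a nested family of intervals, each symmetric about $\xi$. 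Consequently $j\mapsto s(\eta+j\alpha)$ is symmetric about $\xi$ and unimodal, peaking at $\xi$, and $S_0$ is exactly the $\alpha$-string of the statement; by hypothesis it contains $\eta\pm a\alpha$.

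If $\eta=\xi$ (the central weight), then the peak value of $s$ is $s(\eta)=0$, so $s\le 0$ on the whole string, and since $s\ge 0$ on $P(\Lm)$ (Lemma \ref{moddelta}) we get $s\equiv 0$, as required. Otherwise, applying $r_\alpha$ if necessary, I may assume $\eta$ lies $\rho\ge 1$ steps below $\xi$; unimodality then forces $s\equiv 0$ from the lower endpoint up to $\eta$ and, by symmetry, from $r_\alpha(\eta)$ up to the upper endpoint, so $s$ can be positive only on the central block. Suppose for contradiction that the peak $t=s(\xi)\ge 1$. Let $R_0$ be the radius of $S_0$ and $R_1$ the radius of $S_1=\{s\ge 1\}$. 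The hypothesis $\eta-a\alpha\in P(\Lm)$ puts the lower endpoint of $S_0$ at distance at least $\rho+a$ from $\xi$, so $R_0\ge \rho+a$; and $s(\eta)=0$ means $\eta\notin S_1$, so $R_1\le \rho-1$. Hence $R_0-R_1\ge a+1$. (The case where $\xi$ is a half-lattice point, so the string has even length, runs identically.)

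The contradiction will come from the key estimate that raising the $\delta$-level by one shrinks the radius of the string by \emph{at most} $a=\max_i a_i$, i.e.\ $R_0-R_1\le a$ (equivalently, the top weight of the string descends by at most $a$ copies of $\alpha$); together with $R_0-R_1\ge a+1$ this is absurd, forcing $t=0$ and $s\equiv 0$. The main obstacle is proving this shrinkage estimate, and my plan is to derive it from Proposition \ref{maxcont} together with the saturation of $P(\Lm)$. The top weight $\nu_0$ of $S_0$ is maximal — otherwise $\nu_0+\delta\in P(\Lm)$ would force $S_1=S_0$ and hence $s\ge 1$ everywhere on the string, contradicting $s(\eta)=0$ — and it is the top of its $\alpha$-string, so $\la\nu_0,\alpha^\vee\ra=2R_0>0$. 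Conjugating $\nu_0$ into the dominant chamber by $\overset\circ W$ and invoking Proposition \ref{maxcont}, its content has some coordinate smaller than the corresponding $a_i\le a$. The real work is to convert this content bound, using $\delta=\sum_i a_i\alpha_i$ and the convexity of the support $P(\Lm)$, into the assertion that $\nu_0-a\alpha+\delta$ still lies below $\Lm$ and hence in $P(\Lm)$, which is exactly $R_1\ge R_0-a$. Linking the coordinatewise bound of Proposition \ref{maxcont} to the $(\alpha,\delta)$-geometry of the support is the delicate point; the symmetry and unimodality established above are precisely what reduce the whole proposition to this single inequality.
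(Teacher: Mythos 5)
Your first two paragraphs are correct and a nice repackaging: the nested symmetric intervals $S_0\supseteq S_1\supseteq\cdots$ (using that $\la\alpha^\vee,\delta\ra=0$, so all the shifted strings share a centre) do give symmetry and unimodality of $j\mapsto s(\eta+j\alpha)$, and your bookkeeping correctly reduces the proposition to the single inequality $R_0-R_1\le a$. But the proof stops exactly there: the shrinkage estimate is announced as a plan (``the real work is to convert this content bound\dots''), not proved, and it is the whole content of the proposition. Worse, the sketched route to it has two concrete defects. First, ``$\nu_0-a\alpha+\delta$ lies below $\Lm$'' would not suffice even if established: membership in $P(\Lm)$ requires, via \cite[Proposition 11.2]{Ka}, a \emph{dominant} weight $\le\Lm$, and after conjugating $\nu_0$ into the dominant chamber by some $w\in W$ the weight $w\nu_0-a\,w\alpha+\delta$ is in general not dominant, with $w\alpha$ an arbitrary real root; the one small coordinate $\gamma_i<a_i$ supplied by Proposition \ref{maxcont} gives no control over the $i$th coordinate $t_i$ of $w\alpha$, which is what you would need. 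Second, the endpoint $\nu_0$ is the wrong weight to work at, because it has string on only one side, so no two-sided positivity constraint is available there.

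Compare the paper's proof, which is shorter and avoids the unimodality apparatus entirely: conjugate $\eta$ itself (not $\nu_0$) by $w\in W$ so that $w\eta$ is dominant; since shifts are $W$-invariant, $w\eta$ is a maximal dominant weight, so by Proposition \ref{maxcont} its content has some coordinate $\gamma_i<a_i\le a$. Now the \emph{two-sided} hypothesis enters: $w\eta\pm a\,w\alpha\in P(\Lm)\subseteq\Lm-Q_+$, so writing $w\alpha=\sum_j t_j\alpha_j$, both $\gamma_i+at_i\ge0$ and $\gamma_i-at_i\ge0$ with $0\le\gamma_i<a$, forcing $t_i=0$. Hence \emph{every} weight in the string $wS$ has $i$th content coordinate equal to $\gamma_i<a_i$, so adding $\delta$ takes it out of $\Lm-Q_+$; thus every weight of $wS$ (hence of $S$) is maximal. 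In your argument the hypothesis $\eta-a\alpha\in P(\Lm)$ is used only to enlarge $R_0$, never to kill a coordinate of $w\alpha$, and without $t_i=0$ there is no mechanism to propagate the content bound along the string. Note also that your estimate $R_0-R_1\le a$, taken as a general fact about strings with positive peak, is at least as strong as the proposition itself (in types $A^{(1)}_\ell$ and $D^{(2)}_{\ell+1}$ it already yields the strict-monotonicity corollary), so one should not expect it to follow from soft convexity considerations; the vanishing coordinate is the key idea your proposal is missing.
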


\begin{proof}
By \cite[Corollary 10.1]{Ka}, we can find $w\in W$ such that $w\eta$ has a positive hub.  Since the action of $W$ preserves $P(\Lm)$, the $\alpha$-string $S$ containing $\eta$ is mapped by $w$ to the $(w\alpha)$-string $wS$ containing $w\eta$ as well as $w\eta\pm aw\alpha$.  Furthermore, since $\delta$ is fixed by the action of $W$, the $k$-values along $wS$ will be the same as the $k$-values along $S$.

In particular, $w\eta$ has $\delta$-shift zero, so it is a maximal weight with positive hub.  Hence by Proposition \ref{maxcont}, some component of the content of $w\eta$, say the $i$th component $\gamma_i$, is less than $a_i \leq a$. If we write $w\alpha=\sum_jt_j\alpha_j$, then the $i$th component of the content of $w\eta\pm aw\alpha$ is $\gamma_i\pm at_i$; but $w\eta\pm aw\alpha$ lies in $P(\Lm)\subseteq\Lm-Q_+$, so has content in which every component is non-negative.  Hence we must have $t_i=0$.  This means that the $i$th component of the content of every weight in the string $wS$ equals $\gamma_i<a_i$, so every weight in the string $wS$ is maximal.  So the $\delta$-shifts of all the weights in $wS$ are zero, and hence the $\delta$-shifts of all the weights in $S$ are zero.
\end{proof}
Now we can give some more precise information about the behavior of the $\delta$-shifts along an $\alpha$-string.

\begin{corl}
Suppose $\mathfrak g$ is of type $A^{(1)}_\ell$ or $D^{(2)}_{\ell+1}$.  Then along any $\alpha$-string, the $\delta$-shifts are either constant or strictly increasing to a symmetric central portion on which the $\delta$-shifts is fixed, after which they are strictly decreasing.
\end{corl}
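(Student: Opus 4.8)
The plan is to study the $\alpha$-string through a point $\eta\in P(\Lm)$ by slicing $P(\Lm)$ with the two-dimensional lattice $\eta+\bbz\alpha+\bbz\delta$ and reading off the $\delta$-shifts as the heights of the upper boundary of that slice. For each $m\ge0$ set
\[
S_m=\{\,j\in\bbz\mid \eta+j\alpha+m\delta\in P(\Lm)\,\}.
\]
Three structural facts organise everything. First, each $S_m$ is a (possibly empty) set of consecutive integers: the weight spaces $L(\Lm)_{\eta+m\delta+j\alpha}$ assemble into a finite-dimensional module for the copy of $\mathfrak{sl}_2$ attached to the real root $\alpha$, and the weights of any such module form an unbroken symmetric interval. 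Second, all the $S_m$ are symmetric about one common centre: since $r_\alpha\in W$ fixes both $\delta$ and $P(\Lm)$ and satisfies $\la\alpha^\vee,\delta\ra=0$, it maps each line $\eta+m\delta+\bbz\alpha$ to itself, reversing the index $j$ about the point $c=-\tfrac12\la\alpha^\vee,\eta\ra$, which does not depend on $m$. Third, the $S_m$ are nested, $S_0\supseteq S_1\supseteq\cdots$, because $P(\Lm)$ is closed under subtracting $\delta$. Writing $s_j:=s(\eta+j\alpha)=\max\{m\mid j\in S_m\}$, the second and third facts already force the shift profile $j\mapsto s_j$ to be symmetric about $c$ and weakly unimodal (weakly increasing up to $c$, weakly decreasing afterwards).

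Next I would reduce the corollary to a single quantitative statement. Let $M$ be the maximal shift, attained exactly on the symmetric central interval $S_M$ (the plateau). Because the $S_m$ are nested symmetric intervals with common centre, strict monotonicity off the plateau is equivalent to the claim that for every level $m<M$ the interval $S_{m+1}$ is obtained from $S_m$ by deleting \emph{at most one} integer from each end; equivalently, below the top no level-$m$ string has a maximal weight in its interior, all of its maximal weights sitting in the outermost position on each side. A deletion of two or more from an end is precisely what would produce a spurious sub-maximal plateau (two adjacent positions with equal shift $m<M$), which is what must be excluded.

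For type $A^{(1)}_\ell$ this exclusion follows cleanly from Proposition~\ref{internal}, where $a=\max a_i=1$. Suppose for contradiction that $s_j=s_{j+1}=m<M$ for some adjacent pair; by the symmetry about $c$ we may assume $j+1<c$, and then weak unimodality gives $s_{j+2}\ge s_{j+1}=m$. Thus the maximal weight $\xi=\eta+(j+1)\alpha+m\delta$ has both $\xi+\alpha=\eta+(j+2)\alpha+m\delta$ and $\xi-\alpha=\eta+j\alpha+m\delta$ in $P(\Lm)$, so Proposition~\ref{internal} forces the entire level-$m$ string to consist of maximal weights. This means $S_{m+1}=\varnothing$, i.e.\ $m=M$, a contradiction. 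Hence the profile is strictly increasing, then constant on the plateau, then strictly decreasing (the degenerate case being a constant profile).

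The main obstacle is type $D^{(2)}_{\ell+1}$, where $a=\max a_i=2$. Here Proposition~\ref{internal} only rules out a maximal weight at distance $\ge2$ from \emph{both} ends of a level-$m$ string, so on its own it yields merely ``deletion $\le2$'' and permits a length-two plateau one step inside an end — exactly the configuration the corollary forbids. Closing this gap is the crux, and I expect it to need genuinely type-specific input rather than Proposition~\ref{internal} alone. The natural line of attack exploits that the end nodes of $D^{(2)}_{\ell+1}$ carry mark $a_0=a_\ell=1$: whenever a level-$m$ string has a maximal weight one step inside an end, I would try to arrange the Weyl-group reduction underlying Proposition~\ref{maxcont} so that the coordinate forced to satisfy $\gamma_i<a_i$ lands on one of these mark-one nodes, thereby restoring the effective bound $a=1$ at the ends and hence ``deletion $\le1$''. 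Verifying that such a reduction is always available — that the small coordinate guaranteed by Proposition~\ref{maxcont} can be steered to an end node of the diagram in this twisted type — is the step I expect to be the most delicate part of the argument.
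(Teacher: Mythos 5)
Your structural setup is sound, and your treatment of type $A^{(1)}_\ell$ is correct: the nested symmetric intervals $S_0\supseteq S_1\supseteq\cdots$ with common centre give symmetry and weak unimodality, and Proposition~\ref{internal} with $a=1$ kills any sub-maximal plateau. This is essentially the paper's argument in different clothing — the paper runs an induction on $\max_i s_i$, adding $\delta$ to the whole string and deleting the (maximal) endpoints, which is exactly your ``at most one deletion from each end'' formulation read level by level.

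But the ``main obstacle'' you identify in type $D^{(2)}_{\ell+1}$ does not exist: your claim that $a=\max_i a_i=2$ there is false. You have confused the marks $a_i$ (the coefficients of the null root $\delta=\sum_i a_i\alpha_i$, which is what Proposition~\ref{internal} uses) with the dual marks $a_i^\vee$ (the coefficients of the canonical central element $c$, which in this type are $1,2,\dots,2,1$ and which produce the coefficient pattern $\psi_0+2(\psi_1+\cdots+\psi_{\ell-1})+\psi_\ell$ in the level-zero condition appearing in the proof of Proposition~\ref{smallpath}), or possibly with the $-2$ entries of the Cartan matrix. In type $D^{(2)}_{\ell+1}$ all marks equal $1$: with the Cartan matrix displayed in the paper one checks directly that the all-ones vector lies in the kernel ($2-2=0$ in the two extreme rows, $-1+2-1=0$ in the interior rows), so $\delta=\alpha_0+\alpha_1+\cdots+\alpha_\ell$. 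Thus $a=1$ in Proposition~\ref{internal} for both types — this is precisely why the corollary is stated for $A^{(1)}_\ell$ and $D^{(2)}_{\ell+1}$ and no others, and the paper's proof opens with exactly this observation. Your $A^{(1)}_\ell$ argument therefore applies verbatim to $D^{(2)}_{\ell+1}$, and your speculative final paragraph (steering the small coordinate of Proposition~\ref{maxcont} to a mark-one end node) is unnecessary; with that correction your proof is complete.
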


\begin{proof}
For either of these types, all the $a_i$ equal $1$; so by Proposition \ref{internal}, if there is a string containing a maximal weight which is not an endpoint of that string, then the string consists entirely of maximal weights.

Take an $\alpha$-string, and write the weights in this string as
\[\lm,\lm+\alpha,\dots,\lm+t\alpha,\]
and write $s_i$ for the $\delta$-shift of $\lm+i\alpha$, for $0\leq i\leq t$.  Since the string must be symmetric with respect to the reflection $r_\alpha$ \cite[Proposition 11.1(a)]{Ka}, the $\delta$-shifts must be symmetrical too, i.e.\ $s_i=s_{t-i}$ for each $i$.  We prove the result by induction on $\max_is_i$.

Assume first that $s_i>0$ for each $i$.  Then we can shift the string by adding $\delta$ to each weight, and obtain a new $\alpha$-string in which the $\delta$-shifts are $s_0-1,\dots,s_t-1$; by induction the result holds for this new string, and so it holds for the original string.

So we may assume that some $s_i$ equals zero, i.e. there is a maximal weight in the string.  If there is a maximal weight which is not an endpoint of the string, then by Proposition \ref{internal} all the weights in the string are maximal; hence all the $k$-values equal zero, so the result holds.  Alternatively, suppose the only maximal weights in the string are the endpoints (note that if one endpoint is maximal, then so is the other, by the symmetry above).  Now shifting the string by adding $\delta$ to each weight and deleting the endpoints, we obtain a new string in which the $\delta$-shifts are $s_1-1,\dots,s_{t-1}-1$.  By induction the result holds for this new string, and so it holds for the original string.
\end{proof}

We remark that this result is certainly not true in other types.  In general, it is difficult to describe precisely the behaviour of $\delta$-shifts along a string, but they seem to vary approximately quadratically along the string.

\end{document}